\def\n{\noindent}
\def\spo{spo(2m,n)}
\def\ov{\overline}
\def\c{\circ}
\def\ov{\overline}
\theoremstyle{plain}
\newtheorem{thm}{Theorem}[section]
\newtheorem{lem}[thm]{Lemma}
\newtheorem{cor}[thm]{Corollary}
\theoremstyle{definition}
\newtheorem{exa}[thm]{Example}
\newdimen\Squaresize \Squaresize=14pt
\newdimen\Thickness \Thickness=0.4pt
\def\Square#1{\hbox{\vrule width \Thickness
   \vbox to \Squaresize{\hrule height \Thickness\vss
      \hbox to \Squaresize{\hss#1\hss}
   \vss\hrule height\Thickness}
\unskip\vrule width \Thickness} \kern-\Thickness}
\def\Vsquare#1{\vbox{\Square{$#1$}}\kern-\Thickness}
\def\younglambda#1{
\vbox{\smallskip\offinterlineskip \halign{&\Vsquare{##}\cr #1}}}
\def\vy#1{\hskip2pt\vcenter{\younglambda{#1}}\hskip2pt}
\newcommand{\shiftbar}[2]{\ensuremath \raisebox{#1cm}{$\leftarrow\overline{#2}$}}
\newcommand{\shiftnobar}[2]{\ensuremath \raisebox{#1cm}{$\leftarrow{#2}$}}
\title[An orthosymplectic Pieri rule]%
{An orthosymplectic Pieri rule}
\author{Anna Stokke}
\address{University of Winnipeg \\
Department of Mathematics and Statistics \\
Winnipeg, Manitoba \\
Canada  R3B 2E9}
\email{\tt a.stokke@uwinnipeg.ca}
\thanks{This research was supported in part by a grant from the Natural
Sciences and Engineering Research Council of Canada.}
\begin{document}

\begin{abstract} The classical Pieri formula gives a combinatorial rule for decomposing the product of a Schur function and a complete homogeneous
symmetric polynomial as a linear combination of Schur functions with integer coefficients.  We give a Pieri rule for describing the product of an orthosymplectic character and an orthosymplectic character arising from a one-row partition. We establish that the orthosymplectic Pieri rule coincides with Sundaram's Pieri rule for symplectic characters and that orthosymplectic characters and symplectic characters obey the same product rule.  
  \end{abstract}

\maketitle

\section{Introduction}

The celebrated Littlewood-Richardson rule gives a combinatorial description of the coefficients that appear when the product  $s_\lambda s_\mu$ of two Schur functions is written as a linear combination of Schur functions in the ring of symmetric polynomials \cite{littlewood,schutz}.  In the case where one of the partitions is a one-row partition $\mu=(k)$, the expansion of the product takes an especially nice form and the combinatorial description of the coefficients is known as Pieri's rule.  Pieri's rule states that the product $s_\lambda s_{(k)}$ is the sum of Schur functions $s_\nu$, where the Young diagram of shape $\nu$ can be obtained from that of $\lambda$ by 
adding $k$ boxes, where no two of the added boxes are in the same column. 

Berele and Regev \cite{bereleregev} introduced hook Schur functions, which are characters of general linear Lie superalgebras.  Remmel \cite{remmel} proved a Jacobi-Trudi identity for hook Schur functions, via lattice paths, that coincides with that for ordinary Schur functions and also proved that hook Schur functions and ordinary Schur functions satisfy the same Pieri rule, thus establishing that hook Schur functions obey the Littlewood-Richardson rule.  

Symplectic Schur functions,  which are characters of irreducible representations of symplectic groups, can be described in terms of the semistandard symplectic tableaux of King~\cite{king}.  Berele \cite{berele} gave an insertion algorithm for inserting a letter into a symplectic tableau using a combination of Knuth-Schensted row insertion and the jeu de taquin algorithm.  Sundaram \cite{sundaram} gave a Pieri rule for decomposing the product $sp_\lambda sp_{(k)}$ of two symplectic Schur functions, one of which is associated to a one-row partition, as a linear combination of symplectic Schur functions using Berele's row-insertion algorithm.

Benkart, Shader and Ram \cite{benkart} gave combinatorial descriptions of characters of representations of orthosymplectic Lie superalgebras $\spo$, or orthosymplectic Schur functions,   in terms of $spo$-tableaux,
which have both a symplectic part and a row-strict part.  An $spo$-insertion algorithm for inserting a letter into an $spo$-tableau to produce a new $spo$-tableau is also described in \cite{benkart}.

The aim of this paper is to present a Pieri rule  for orthosymplectic Schur functions using $spo$-tableaux. Further, we prove that the orthosymplectic Pieri rule coincides with that for symplectic Schur functions.  In \cite{benkart}, a Jacobi-Trudi identity for orthosymplectic Schur functions is given that mirrors  the symplectic Jacobi-Trudi identity; this result is proved using lattice path arguments in \cite{stokkevisentin}.  It follows that the coefficients arising in the expansion of the product of two orthosymplectic Schur functions are the same as those that appear in the expansion of the product of the corresponding symplectic Schur functions.  

We begin with a preliminary section before discussing the $spo$-insertion algorithm in Section~\ref{sec:insertion}.  We then present a definition for the combinatorial product of an $spo$-tableau and a one-row $spo$-tableau and set about proving several technical lemmas  in Section \ref{sec:main}.  Our main result, Theorem \ref{mainthm}, describes the integer coefficients of the orthosymplectic characters in the expansion of the product $spo_\lambda spo_{(k)}$ of an orthosymplectic character and an orthosymplectic character associated to a one-row tableau.

\section{Preliminaries}\label{sec:prelim}
A {\em partition} of a positive integer $N$ is a $k$-tuple of  positive integers
$\lambda=(\lambda_1,\ldots,\lambda_k)$ with $\lambda_1 \geq \lambda_2 \geq \cdots \geq \lambda_k$ and $\vert \lambda \vert =\sum_{i=1}^k \lambda_i=N$. The {\em Young diagram} of shape $\lambda$ contains $N$
boxes in $k$ left-justified rows with $\lambda_i$ boxes in the $i$th row.  For convenience, we will also denote the Young diagram of shape $\lambda$ by $\lambda$.  The {\em length} of the Young diagram of shape $\lambda$, denoted $\ell(\lambda)$, is equal to the number of rows in $\lambda$.  The {\em conjugate} of $\lambda$ is the partition
$\lambda^t=(\lambda_1^t,\lambda_2^t,\ldots,\lambda_s^t)$ where
$\lambda_i^t$ denotes the number of boxes in the $i$th column of the
Young diagram of shape $\lambda$.  A $\lambda$-tableau is obtained by filling the Young diagram of shape $\lambda$ with entries from a set
$\{1,2, \ldots, n\}$ of positive integers. A
$\lambda$-tableau is {\em semistandard} if the entries in the rows are weakly increasing from left to right and the entries in the
columns are strictly increasing from top to bottom.

Given partitions $\lambda$ and $\mu$, we write $\mu \subseteq \lambda$ if $\mu_i \leq \lambda_i$
for $i \geq 1$.
The {\em skew diagram} of shape $\lambda/\mu$ is formed by removing the Young diagram of shape $\mu$ from
the upper left-hand corner of the Young diagram of shape $\lambda$ and $\vert \lambda/\mu \vert =\vert \lambda \vert - \vert \mu \vert$.
A {\em skew tableau} of shape $\lambda/\mu$ is obtained by filling the skew diagram of shape $\lambda/\mu$ with
entries from a set $\{1,2,\ldots,n\}$ and is semistandard if the entries in the rows increase weakly from left to right and the entries in the columns increase
strictly from top to bottom.  A {\em horizontal strip} is a skew diagram $\lambda/\mu$ with no two boxes in the same column.

For a $\lambda$-tableau $T$, let $a_i(T)$ denote the number of entries equal to $i$ in $T$.  The {\em weight}
of $T$ is the monomial in the variables $X=\{x_{1},x_2,\ldots, x_n\}$ defined by $
\mathrm{wt}(T)=\prod_{i=1}^n x_i^{a_i(T)}. $  The {\em
 Schur function} corresponding to $\lambda$  is
$$s_{\lambda}(X)=\sum_{T} \mbox{wt}(T),$$ where the
sum runs over all semistandard $\lambda$-tableaux $T$ with entries in $\{1,2, \ldots,n\}$.
The {\em weight} of a skew tableau with entries from $\{1,\ldots,n\}$ and the
{\em skew Schur polynomial}, $s_{\lambda/\mu}(x_1,\ldots,x_n)$, are defined similarly.

Suppose that $\lambda$ is a partition in which the Young diagram of shape $\lambda$
has no more than $m$ rows.   A {\em semistandard symplectic $\lambda$-tableau} (see~\cite{king})  has entries in the set $B_0=\{1,\ov{1},2,\ov{2},\dots,m,\ov{m}\}$, is semistandard with respect to the ordering $1< \ov{1} < 2 < \ov{2} < \cdots < m < \ov{m}$
 and satisfies the property ({\em symplectic condition}) that the entries
in the $i$th row of $T$ are greater than or equal to $i$, for each
$1\leq i \leq m$.

Let $\overline{X}=\{x_1,x_1^{-1},\ldots,x_m,x_m^{-1}\}$. The weight
of a symplectic $\lambda$-tableau $T$   is $
\mathrm{wt}(T)=\prod_{i=1}^m x_i^{a_i(T)-a_{\ov{i}}(T)} $  where $a_i(T)$ (respectively $a_{\overline{i}}(T)$) is equal to the number of entries equal to $i$ (respectively $\overline{i}$)  in $T$.  The {\em symplectic Schur function} is
$$sp_{\lambda, 2m}(\overline{X})=\sum_{T} \mbox{wt}(T),$$ where the
sum runs over the semistandard symplectic $\lambda$-tableaux $T$ with entries in $\overline{X}$. 

\begin{exa} Let $\lambda=(3,2,1)$ and $m=4$.  The first $\lambda$-tableau below is semistandard symplectic, while the second is not, and $\mbox{wt}(T_1)=x_2x_3^{-2}x_4$.
$$T_1=\vy{1 & \ov{1} & 4 \cr 2 & \ov{3} \cr \ov{3} \cr}, \ \ \ T_2=\vy{1 & 1 & 4 \cr 2 & \ov{3} \cr \ov{2} \cr}$$ \end{exa}

Schur functions and symplectic Schur functions are  characters of  irreducible polynomial representations of the general linear groups $GL(n,\mathbb{C})$  and symplectic groups $Sp(2m,\mathbb{C})$, respectively.  We now turn our attention to the combinatorial properties of characters of orthosymplectic Lie superalgebras.  For background on the representation theory of Lie superalgebras, see \cite{cheng}.  Characters of orthosymplectic Lie superalgebras $\spo$ are hybrids of symplectic Schur functions and ordinary Schur functions and can be defined in terms of  $\spo$-tableaux  \cite{benkart}, which we now define.

Let $m$ and $n$ be  positive integers, $B_0=\{1, \ov1, 2, \ov2, \ldots, m, \ov{m}\}$, $B_1=\{1^\circ,2^\circ,\ldots,n^\circ\}$, $B=B_0 \cup B_1$ and order $B$ as follows:
\[ 1 < \ov1 < 2 < \ov2 < \cdots < m < \ov{m} < 1^\circ < 2^\circ < \cdots < n^\circ. \]

An {\em $spo(2m,n)$-tableau} $T$ of shape $\lambda$ (or an $spo$-tableau) is a filling of the Young diagram of shape $\lambda$ with entries from $B$ that satisfies the following conditions:

\begin{enumerate}
\item  the portion $S$ of $T$ that contains entries only from $B_0$ is semistandard symplectic;
\item the skew tableau formed by removing $S$ from $T$ is strictly increasing across the rows from left to right and weakly increasing down columns from top to bottom.
\end{enumerate}

Let $\overline{X}=\{x_1,x_1^{-1},\ldots,x_m,x_m^{-1}\}$, $Y=\{y_1,\ldots,y_n\}$ and $Z=\overline{X} \ \cup \ Y $ and let  $\lambda$ be a partition.
The {\em orthosymplectic character} corresponding to $\lambda$ is given by
\[ spo_{\lambda}(Z)=\sum_{\substack{\mu \subseteq \lambda \\ \ell(\mu) \leq m}} sp_{\mu}(\overline{X})s_{\lambda^t /\mu^t}(Y), \]
where $sp_\mu(\overline{X})$ is a symplectic Schur polynomial in the variables $\overline{X}$ and $s_{\lambda^t/ \mu^t}(Y)$ is a skew Schur polynomial in the variables $Y$ ~\cite[Theorem 4.24]{benkart}.

An orthosymplectic character can also be described in terms of $spo$-tableau.  To define the weight of an $spo$-tableau $T$, replace each $i \in \{1,2,\ldots,m\}$ that belongs to $T$ with $x_i$,
each $\overline{i} \in \{\ov{1},\ldots,\ov{m}\}$ in $T$ with $x_i^{-1}$ and each $i^\circ \in \{1^\c, \ldots,n^\c\}$ in $T$ with $y_i$ and
let  $\mbox{wt}(T)$ be the product of these variables.
Then $spo_{\lambda}(Z)=\sum_T \mbox{wt}(T)$,
where the sum is over the $spo$-tableaux of shape $\lambda$ with entries from $Z$ \cite[Theorem 5.1]{benkart} .

\begin{exa}
The following is an $spo(8,3)$-tableau of shape $\lambda=(3,3,2,2)$ and
 $\mbox{wt}(T)=x_1^{-1} x_4 y_1^2 y_2 y_3^3$.
 $$T=\vy{\ov{1} & 2 & 3^\c \cr \ov{2} & 1^\c & 3^\c \cr 4 & 1^\c \cr 2^\c & 3^\c \cr}$$
\end{exa}

\begin{exa}Let $\lambda=(1,1)$, $m=2$ and $n=1$.  Then $\overline{X}=\{x_1,x_1^{-1},x_2,x_2^{-1}\}$, $Y=\{y_1\}$  and
$$spo_\lambda(Z)=x_1x_2+x_1x_2^{-1}+x_1^{-1}x_2+x_1^{-1}x_2^{-1}+1+x_1y_1+x_1^{-1}y_1+x_2y_1+x_2^{-1}y_1+y_1^2.$$

\end{exa}

\section{Orthosymplectic insertion and the product of tableaux}\label{sec:insertion}

We first recall ordinary (Schensted-Knuth) {\em row insertion} for semistandard 
$\lambda$-tableaux (see \cite{fulton, sagan, schensted, stanley}).  Given a semistandard $\lambda$-tableau $T$ and a positive integer $x$, construct a new tableau, denoted $T \leftarrow x$, which has one more box than $T$ as follows:
if $x$ is greater than or equal to every entry in the first row, add $x$ in a new box at the end of the first row, giving a new tableau $T \leftarrow x$.  If this is not the case, replace the left-most entry in the first row that is strictly larger than $x$ with $x$ and bump the displaced entry into the second row by repeating the process with the displaced positive integer in row two.  Continue the process for successive rows until an entry lands in a new box at the end of a row or until an entry bumps into a new box at the bottom of the tableau, forming a new row.  The resulting tableau is $T \leftarrow x$ and the new box in $T \leftarrow x$ belongs to an {\em outer corner} of the Young diagram, which means that there is not a box directly below it or directly to its right.

Insertion for $spo$-tableaux has a similar flavour to the insertion algorithm for $(k,\ell)$-semistandard tableaux \cite{bereleregev, remmel} ($(k,\ell)$-semistandard tableaux arise in the description of characters of general linear Lie superalgebras).  In that setting, insertion consists of both ordinary row insertion and ordinary column insertion;   insertion for $spo$-tableaux  incorporates Berele's row insertion \cite{berele} for the symplectic part of a tableau (entries from $B_0$) and ordinary column insertion for the remaining entries from $B_1$. 
To define Berele's insertion algorithm, we first define a version of Schutzenberger's jeu de taquin algorithm for $spo$-tableau, which will also apply to symplectic tableaux.
 A {\em punctured $spo$-tableau} is obtained from an $spo$-tableau by removing the entry from one box.   An orthosymplectic version of jeu de taquin gives a way to move an empty box in the first column of a punctured $spo$-tableau through the tableau until the empty box lands in an outer corner.  A {\em forward slide} is achieved using the following moves:

$$\label{jdtmoves}\vy{ & a \cr b \cr } \rightarrow
\left\{\begin{array}{ll}
\vy{a &  \cr  b \cr} & \text{if $a < b$ or $a=b$ and $a,b \in B_1$} \\[5mm]
\vy{b & a \cr  \cr} & \text{if $b<a $ or $a=b$ and $a,b \in B_0$.}
\end{array}\right.\ \ \ 
$$
 
 When a forward slide is performed on an empty box in a punctured $spo$-tableau, the result is a punctured $spo$-tableau.  Furthermore, given a punctured $spo$-tableau, where the empty box occurs in the first column, forward slides can be continued until the empty box lands in an outer corner \cite{benkart}.  The procedure is also reversible and we will refer to the slides that reverse the procedure as {\em reverse slides}.  
 
To describe Berele row insertion, consider a semistandard symplectic tableau $T$ and $x \in  B_0$.  If  ordinary row insertion
of $x$ into $T$ does not introduce a violation of the symplectic condition, then the new tableau produced by ordinary row insertion is the result of Berele row insertion.  If  a symplectic violation is introduced, it must be the case that, for some $i$,  an $\overline{i}$ was bumped out of row $i$ and into row $i+1$ by an $i$.  Find the least $i$ for which this occurs and, at this stage, instead of replacing the $\overline{i}$ with an $i$, remove the $\overline{i}$ from the tableau, leaving a hole in the box that contained it.   Apply jeu de taquin forward slides to the empty box until it lands in an outer corner, giving a semistandard symplectic tableau that has one less box than $T$.  This will be referred to as a cancellation and this new tableau is the result of the Berele insertion of $x$ into $T$. 

\bigskip

\begin{exa}  In the following example, the entry $\overline{1}$ is inserted into the symplectic tableau $T$.
$$T=\vy{1 & \overline{1} & {\bf 2} & 3 \cr \overline{2} & 3 \cr \overline{3} & 4  \cr}\shiftbar{.45}{1} \ \ \ \vy{1 & \overline{1} & \overline{1} & 3 \cr & 3 \cr \overline{3} & 4 \cr}; \ \ \ \vy{1 & \overline{1} & \overline{1} & 3 \cr 3 & 4 \cr \overline{3} \cr}$$
\end{exa}

We now define insertion of a letter $x \in B_0 \cup B_1$ into an $spo$-tableau $T$, as in \cite[\S 5.3]{benkart}.  We will refer to the procedure as $spo$-insertion and if $x \in B_0$, insertion of $x$ into $T$ will be denoted $T \leftarrow x$, while if $x \in B_1$, insertion of $x$ into $T$ will be denoted $x \rightarrow T$.  This is consistent with notation used in \cite{schensted}, since entries in $B_0$ will be inserted into rows, while entries in $B_1$ will be inserted into columns.

We first describe how to insert an entry $x$ into a given row or column. If $x \in B_1$ is greater than or equal to every entry in a given column, place $x$ in a new box at the end of that column.  If not, replace the least entry $y$ in the column that satisfies $y>x$ with $x$, which displaces $y$. 
For entries $x$ belonging to $B_0$, insert $x$ into a given row by placing it  in a new box at the end of the row if $x$ is larger than every entry in the row.  Otherwise,   replace the least entry $y$ in the row that satisfies $y>x$ with $x$, displacing $y$, provided that doing so does not cause $x$ to replace an entry $\overline{x}$ in row $x$.  If this is the case, remove  $\overline{x}$ from row $x$, leaving an empty box.

In general, to insert $x$ into an $spo$-tableau $T$, begin by inserting $x$ into the first row of $T$ if $x\in B_0$ and insert $x$ into the first column of $T$ if $x \in B_1$, as described above.  If doing so displaces an entry $y\in B_0$, insert $y$ into the subsequent row using the same procedure, while if an entry $y\in B_1$ is displaced, insert $y$ into the subsequent column.  On the other hand, if  an empty box arises, move the empty box to an outer corner of the tableau using jeu de taquin slides.   Repeat the procedure until an entry lands in a new box at the end of a row or until an empty  box moves to an outer corner.   The process results in an $spo$-tableau $T \leftarrow x$, if $x \in B_0$, and $x \rightarrow T$, if $x \in B_1$.  When a cancellation occurs, the new tableau $T \leftarrow x$ has one less box than $T$.  If a cancellation does not occur, $T \leftarrow x$ (respectively $x \rightarrow T$) has one more box than $T$ and the new  box belongs to an outer corner.
\begin{exa}

The following example illustrates the steps in the $spo$-insertion  process, where $\overline{1}$ is inserted into the $spo$-tableau $T$.

$$T=\vy{ 1 & {\bf 2} & \overline{2} \cr 2 & 3 & 4 \cr \overline{3} & 1^\circ \cr 1^\circ & 2^\circ \cr} \shiftbar{.62}{1}\ \ \ \ \ \vy{1 & \overline{1} & \overline{2} \cr 2 & {\bf 3} & 4 \cr \overline{3} & 1^\circ \cr 1^\circ & 2^\circ \cr} \shiftnobar{.2}{2} \ \ \ \ \  
\vy{1 & \overline{1} & \overline{2} \cr 2 & 2 & 4 \cr {\bf \overline{3}} & 1^\circ \cr 1^\circ & 2^\circ \cr} \shiftnobar{-.3}{3} \ \ \ \ \ 
\vy{1 & \overline{1} & \overline{2} \cr 2 & 2 & 4 \cr & 1^\circ \cr 1^\circ & 2^\circ \cr};  \ \ \ \ \ \vy{1 & \overline{1} & \overline{2} \cr 2 & 2 & 4 \cr 1^\circ & 2^\circ \cr 1^\circ \cr}$$

\end{exa}

Let $x \in B_0$,  suppose that $x$ does not cause a cancellation when  inserted into an $spo$-tableau $T$ and suppose that $T \leftarrow x$ has shape $\nu$.  Define the {\em sp-bumping route} of $x$ to be the collection of boxes from the Young diagram of shape $\nu$ that consist of those boxes from which an element from $B_0$ was bumped from a row, together with the last box in which an element from $B_0$ lands.  This last box will be referred to as the final box in the bumping route for $x$.  As in \cite{fulton}, we say that an $sp$-bumping route $R_1$ sits strictly right (or weakly right) of an $sp$-bumping route $R_2$ if for every row in the Young diagram that contains a box of $R_1$, $R_2$ contains a box that is right of the box in $R_1$.  
We define $R_1$ to be above (or weakly above) $R_2$ similarly.

\begin{exa}  The $sp$-bumping route for the row-insertion of $3$ into the given tableau $T$ is marked in bold.
$$\vy{\ov{1} & \ov{3} & 5^\circ \cr 4 & 4^\circ \cr 3^\circ & 4^\circ \cr} \leftarrow 3 =\vy{\ov{1} & {\bf 3} & 4^\circ & 5^\circ \cr {\bf \ov{3}} & 3^\circ \cr {\bf 4} & 4^\circ \cr}$$

\end{exa}

In what follows, we will associate each term in an orthosymplectic Schur function $spo_\lambda$ with an $spo$-tableau $T$ of shape $\lambda$,  and each term in an orthosymplectic Schur function $spo_{(k)}$, where $k$ is a positive integer, with a one-row $spo$-tableau $U$ with $k$ boxes.  The result of multiplying a term from $spo_\lambda$ with a term from $spo_{(k)}$ will correspond to a certain $\nu$-tableau, which has shape corresponding to the {\em product} $T\cdot U$, which we now define. 

Given an $spo$-tableau $T$ and a one-row $spo$-tableau $U=\vy{a_1 & a_2 & \ldots & a_r & b_1 & b_2 & \ldots & b_t \cr}$, where $a_i \in B_0$ with $a_1 \leq a_2 \leq \cdots \leq a_r$ and  $b_i \in B_1$ with $b_1 <b_2< \cdots < b_t$, define $$T \cdot U = (b_1 \rightarrow \cdots \rightarrow b_t \rightarrow T) \leftarrow a_1 \leftarrow a_2 \leftarrow \cdots \leftarrow a_r.$$
The tableau resulting from each stage of the $spo$-insertion process is an $spo$-tableau \cite[Lemma 5.4]{benkart} so  $T \cdot U$ is an $spo$-tableau. 

\begin{exa} The tableaux involved in each step of the construction of $T \cdot U$, where $U=\vy{1 & 3 & 5 & 2^\circ & 5^\circ \cr}$ are given below.  
$$T=\vy{\overline{1} & 2 & 5 \cr 3 & \overline{4} & 1^\circ \cr 4 & 5^\circ \cr 2^\circ \cr}, \ \ 
\vy{\ov{1} & 2 & 5 \cr 3 & \ov{4} & 1^\circ \cr 4 & 5^\circ \cr 2^\circ \cr 5^\circ \cr}, \ \ 
\vy{\ov{1} & 2 & 5 \cr 3 & \ov{4} & 1^\circ \cr 4 & 5^\circ \cr 2^\circ & 5^\circ \cr 2^\circ \cr }, \ \ 
\vy{2 & \ov{4} & 5 \cr 3 & 1^\circ \cr 4 & 5^\circ \cr 2^\circ & 5^\circ  \cr 2^\circ \cr}, \ \ \vy{2 & 3 & 5 \cr 3 & \ov{4} & 1^\circ \cr 4 & 5^\circ \cr 2^\circ  & 5^\circ \cr 2^\circ \cr}, \ \ \vy{2 & 3 & 5 & 5 \cr 3 & \ov{4} & 1^\circ \cr 4 & 5^\circ \cr 2^\circ & 5^\circ \cr 2^\circ \cr} = T \cdot U$$

\end{exa}

\section{An orthosymplectic Pieri rule}\label{sec:main}

Our main result is a Pieri rule for orthosymplectic Schur functions.
\begin{thm}\label{mainthm} Let $\lambda$ be a partition and $k$ be a positive integer.  Then $$spo_\lambda \cdot spo_{(k)}=
\sum_\nu \alpha_\nu spo_\nu, \mbox{ where}$$
$$\alpha_\nu= \vert \{ \mu \mid \mu \subseteq \lambda, \mu \subseteq \nu, \lambda/\mu,  \nu/\mu \mbox{ are horizontal strips and } \vert\nu/\mu \vert+\vert \lambda/\mu \vert=k\}\vert.$$  \end{thm}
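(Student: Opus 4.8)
The plan is to deduce the identity from the combinatorial product $T\cdot U$ together with a weight-preserving bijection built on $spo$-insertion. By the tableau description of orthosymplectic characters we have $spo_\lambda(Z)=\sum_{\mathrm{sh}(T)=\lambda}\mathrm{wt}(T)$ and $spo_{(k)}(Z)=\sum_U\mathrm{wt}(U)$, the latter sum running over one-row $spo$-tableaux $U$ with $k$ boxes. The first step is to verify that each single-letter insertion multiplies the weight by that of the inserted letter: ordinary row and column insertion merely permute entries, while a cancellation additionally deletes a pair $i,\overline{i}$ of combined weight $x_ix_i^{-1}=1$. Hence $\mathrm{wt}(T\cdot U)=\mathrm{wt}(T)\,\mathrm{wt}(U)$, so that
\[ spo_\lambda\cdot spo_{(k)}=\sum_{T,U}\mathrm{wt}(T\cdot U), \]
and, grouping this sum according to the value $V=T\cdot U$, it suffices to prove that for each fixed $spo$-tableau $V$ of shape $\nu$ the number of pairs $(T,U)$ with $\mathrm{sh}(T)=\lambda$, $|U|=k$ and $T\cdot U=V$ is exactly $\alpha_\nu$.

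To obtain this count I would construct, for each fixed $V$, a bijection between these pairs and the set $M_\nu=\{\mu\mid \mu\subseteq\lambda,\ \mu\subseteq\nu,\ \lambda/\mu\text{ and }\nu/\mu\text{ horizontal strips},\ |\lambda/\mu|+|\nu/\mu|=k\}$, whose cardinality is $\alpha_\nu$. In the forward direction, if $c$ cancellations occur while forming $T\cdot U$ then $|\nu|=|\lambda|+k-2c$, and $\mu$ is the shape recording the insertion, obtained from $\lambda$ by deleting the $c$ cancelled boxes, so that $\lambda/\mu$ is the set of cancelled boxes (of size $c$) and $\nu/\mu$ the set of net-added boxes (of size $k-c$). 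In the reverse direction one reconstructs $(T,U)$ from $(V,\mu)$ by reverse $spo$-insertion: the entries in the boxes of the horizontal strip $\nu/\mu$ are reverse-bumped out of $V$ in the appropriate order, recovering the inserted letters and shrinking $V$ to shape $\mu$; the cancellations recorded by $\lambda/\mu$ are then undone using reverse slides, restoring the deleted barred letters and regrowing the shape to $\lambda$. The emitted letters assemble, in weakly increasing $B_0$- and strictly increasing $B_1$-order, into the one-row tableau $U$, and the residual tableau is $T$.

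Showing that these maps are well defined and mutually inverse is where the technical lemmas of this section enter; they are the $spo$-analogues of the classical row-bumping lemma. The essential inputs are monotonicity of routes: successively row-inserting the weakly increasing $B_0$-letters $a_1\le\cdots\le a_r$ yields $sp$-bumping routes that sit weakly right of one another, and column-inserting the strictly increasing $B_1$-letters behaves dually, so that the net-added boxes and the cancelled boxes each meet every column at most once. From this one deduces that $\nu/\mu$ and $\lambda/\mu$ are genuine horizontal strips and that $\mu$ is a partition contained in both $\lambda$ and $\nu$, placing $\mu$ in $M_\nu$, while the same monotonicity makes the reverse-bumping unambiguous, so that distinct $\mu\in M_\nu$ yield distinct pairs and every admissible $\mu$ is realized. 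The main obstacle, and the feature that distinguishes this rule from the classical Pieri rule where the shape only grows and $\mu=\lambda$, is the interplay between Berele cancellations and column insertion: a $B_0$ row insertion can bump a $B_1$ entry and thereby trigger a column insertion, so the symplectic and row-strict parts do not evolve independently. The crux is to prove that, despite this coupling, the cancelled boxes still form a horizontal strip and that reverse insertion correctly distinguishes a freshly added box from one that must be restored by undoing a cancellation; this is precisely what pins $\mu$ down and yields the bijection. That the resulting coefficients $\alpha_\nu$ agree with Sundaram's symplectic Pieri coefficients, as anticipated in the introduction, provides a useful consistency check.
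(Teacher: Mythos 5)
Your overall skeleton matches the paper's: weight-preservation of $spo$-insertion reduces the identity to showing that, for each fixed $spo$-tableau $V$ of shape $\nu$, the fibre of the product map over $V$ has cardinality $\alpha_\nu$. But the central combinatorial step is not just unfinished, it is set up incorrectly. You define $\mu$ as ``$\lambda$ with the $c$ cancelled boxes deleted,'' so that $\lambda/\mu$ is the set of cancelled boxes. In the orthosymplectic setting this does not typecheck: the product $T\cdot U$ is computed by \emph{first} column-inserting all the $B_1$ letters (which grows $\lambda$ to a shape $\mu^\circ\supseteq\lambda$) and only then row-inserting the $B_0$ letters, so the boxes removed by Berele cancellations are boxes of $\mu^\circ$, not of $\lambda$, and need not lie in $\lambda$ at all. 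The actual shape trajectory is a four-term sequence $(\lambda,\mu^\circ,\mu^-,\nu)$ with $\lambda\subseteq\mu^\circ$, $\mu^-\subseteq\mu^\circ$, $\mu^-\subseteq\nu$ (Theorem \ref{shapethm}), and no term of that sequence is the $\mu$ appearing in the statement of the theorem. The shape $\mu$ with $\mu\subseteq\lambda$, $\mu\subseteq\nu$ has to be \emph{manufactured} from the four-term data by a nontrivial re-indexing of the cancelled boxes (the dot-moving construction of Lemma \ref{bijectionlem}, which relocates a cancelled box not lying in $\lambda$ to the rightmost undotted box of the previous row of $\lambda$ and verifies that the result is still a horizontal strip). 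Your proposal contains no mechanism for producing such a $\mu$, and you cannot obtain it by ``deleting the cancelled boxes from $\lambda$.''

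There is a second, related gap in the counting. Even granting a correspondence between fibres and shapes $\mu$, you must show each admissible $\mu$ contributes \emph{exactly one} pair $(T,U)$, i.e.\ that the partition of the $k-\ell$ added letters into $m_1$ column-inserted $B_1$ letters and $m_2$ row-inserted $B_0$ letters is forced. A priori the same triple $(\lambda,\mu,\nu)$ could arise from several splits $m_1+m_2=k-\ell$, each giving a different $spo$-bumping sequence under Lemma \ref{bijectionlem} and hence a different candidate decomposition. The paper resolves this by showing that $m_2$ (hence $m_1$) is read off from $V$ itself: one reverse-bumps from the rightmost boxes of $\nu/\mu$ until either $k-\ell$ letters have exited through the first row or a $B_1$ letter exits through the first column, and this determines the split uniquely. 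Without that argument the map from $\{\mu\}$ to fibre elements is not visibly injective or surjective, so the count $\alpha_\nu$ does not follow. You correctly flag the coupling between cancellations and column insertion as ``the crux,'' but the proposal supplies neither the bijection that untangles it nor the uniqueness of the split, and those are precisely the content of the proof.
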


Theorem \ref{mainthm} establishes that orthosymplectic  Schur functions obey the same Pieri rule as Sundaram's Pieri rule for symplectic Schur functions \cite[Theorem 4.1]{sundaram}.  Since the Jacobi-Trudi identity for orthosymplectic Schur functions matches that for symplectic Schur functions (see \cite{benkart} and \cite[Corollary 4.3]{stokkevisentin}), the structure coefficients that appear in the expansion of the product of two orthosymplectic Schur functions are the same as those that appear in the expansion of the product of the corresponding symplectic Schur functions. This situation is similar to that which occurs for characters of general linear Lie superalgebras \cite{bereleregev, remmel}.

\begin{cor}  Suppose that $sp_\lambda \cdot sp_\mu=\sum \alpha_\nu sp_\nu$.  Then $spo_\lambda \cdot spo_\mu=\sum \alpha_\nu spo_\nu$.
\end{cor}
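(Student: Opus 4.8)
The plan is to show that the coefficients appearing in a product of two characters are determined entirely by two pieces of data which, by the results recalled above, are identical for the symplectic and orthosymplectic families: the Pieri rule for multiplication by a one-row character, and the Jacobi-Trudi identity, which writes an arbitrary character as a fixed determinant in the one-row characters. Granting this, equality of the two sets of structure constants follows at once, and in fact yields an explicit formula for the $\alpha_\nu$ in terms of data common to both families.

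First I would record the one-row case: by Theorem~\ref{mainthm} together with \cite[Theorem 4.1]{sundaram}, for every partition $\lambda$ and positive integer $k$ we have $sp_\lambda \cdot sp_{(k)}=\sum_\nu \beta^{\lambda,k}_\nu\, sp_\nu$ and $spo_\lambda \cdot spo_{(k)}=\sum_\nu \beta^{\lambda,k}_\nu\, spo_\nu$ with the same integer coefficients $\beta^{\lambda,k}_\nu$, namely those of Theorem~\ref{mainthm}. I would then upgrade this to an arbitrary product of one-row characters by induction on the number of factors: applying the one-row rule term-by-term yields, for any $k_1,\dots,k_r$,
$$sp_\lambda \cdot sp_{(k_1)}\cdots sp_{(k_r)}=\sum_\nu \gamma_\nu\, sp_\nu \quad\text{and}\quad spo_\lambda \cdot spo_{(k_1)}\cdots spo_{(k_r)}=\sum_\nu \gamma_\nu\, spo_\nu,$$
where each $\gamma_\nu$ is the same explicit sum of products of the $\beta$'s, hence common to both families; the sums are finite for degree reasons, so no convergence issue arises.

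The last step brings in the Jacobi-Trudi identity. By \cite{benkart} and \cite[Corollary 4.3]{stokkevisentin} there is a single determinantal expression $J_\mu$ in the one-row characters with $sp_\mu=J_\mu(sp_{(1)},sp_{(2)},\dots)$ and $spo_\mu=J_\mu(spo_{(1)},spo_{(2)},\dots)$. Expanding $J_\mu$ as a linear combination of monomials $sp_{(k_1)}\cdots sp_{(k_r)}$ and substituting into $sp_\lambda \cdot sp_\mu=sp_\lambda\cdot J_\mu(sp_{(1)},sp_{(2)},\dots)$, the previous paragraph lets me collect the product as $\sum_\nu \alpha_\nu\, sp_\nu$, where each $\alpha_\nu$ is a fixed combination of the coefficients of $J_\mu$ and the iterated Pieri coefficients $\gamma_\nu$. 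Running the identical computation for $spo$ gives $spo_\lambda \cdot spo_\mu=\sum_\nu \alpha_\nu\, spo_\nu$ with exactly the same $\alpha_\nu$; comparing with the hypothesis $sp_\lambda \cdot sp_\mu=\sum_\nu \alpha_\nu sp_\nu$ and using linear independence of the characters to identify the coefficients yields the claim.

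The main obstacle I anticipate is ensuring that the two halves of the argument mesh at the level of one-row characters: one must use a Jacobi-Trudi identity expressing $sp_\mu$ and $spo_\mu$ through the one-row characters $sp_{(k)}$, $spo_{(k)}$ rather than through one-column characters, so that it is compatible with the horizontal-strip Pieri rule of Theorem~\ref{mainthm}; this is precisely the form provided by \cite{stokkevisentin}, and the factor of $1/2$ that occurs in the symplectic determinant is harmless, since it appears identically on both sides. A secondary point is the well-definedness of the structure constants, that is, the linear independence of $\{sp_\nu\}$ and of $\{spo_\nu\}$; this holds in the stable range (equivalently, for the universal characters), which is the intended setting.
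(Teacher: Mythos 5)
Your proposal is correct and follows essentially the same route as the paper: the paper's justification for this corollary is precisely the preceding paragraph, which combines the coincidence of the Pieri rules (Theorem~\ref{mainthm} versus Sundaram's) with the coincidence of the Jacobi--Trudi determinants to conclude that the structure constants agree. You have simply spelled out the details (iterated Pieri products, expansion of the determinant, linear independence in the stable range) that the paper leaves implicit.
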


We now set about proving several lemmas from which Theorem \ref{mainthm} follows.  Lemmas \ref{flemma} -- \ref{lastlem} allow us to describe the shape $\nu$ of a tableau that is the product of an $spo$-tableau and a one-row $spo$-tableau in Theorem \ref{shapethm}.

Lemmas \ref{flemma} and \ref{splem} follow from known results about ordinary insertion into semistandard tableaux.  Proofs can be found in, for instance, \cite[\S 1.1]{fulton}.

\begin{lem}\label{flemma} Suppose that $x_1,x_2 \in B_1=\{1^\circ,\ldots, n^\circ\}$, with $x_1>x_2$ and that the box added to the Young diagram of $T$ in $x_1 \rightarrow T$ is $S_1$, and the box added to the shape of $x_1 \rightarrow T$ in $x_2 \rightarrow (x_1 \rightarrow T)$ is $S_2$.  Then $S_2$ is weakly above and strictly right of $S_1$.

\end{lem}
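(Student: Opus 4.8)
The plan is to analyze the two column insertions $x_1 \rightarrow T$ and $x_2 \rightarrow (x_1 \rightarrow T)$ and track how the second insertion route relates to the first. Since $x_1, x_2 \in B_1$ with $x_1 > x_2$, both letters are inserted into columns via ordinary (Schensted) column insertion, and the interaction with the symplectic $B_0$-portion only occurs if a displaced $B_0$-entry is subsequently row-inserted. The essential observation is that this is the column-insertion analogue of the standard row-bumping lemma for semistandard tableaux, recorded in \cite[\S 1.1]{fulton}: inserting a \emph{smaller} letter second pushes its bumping route weakly to one side and strictly to the other relative to the route of the larger letter inserted first.

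\emph{First} I would set up the comparison carefully. Insert $x_1$ into the first column of $T$; this produces a column-bumping route, ending with the new box $S_1$. Then insert the strictly smaller $x_2$ into the first column of $x_1 \rightarrow T$. Because $x_2 < x_1$, in the first column $x_2$ bumps an entry that is weakly \emph{higher} (smaller or equal) than the entry bumped by $x_1$; the standard column-bumping comparison then propagates this relationship column by column, forcing the $x_2$-route to lie weakly right and strictly above the $x_1$-route at every stage. Consequently the terminal box $S_2$ sits weakly above and strictly right of $S_1$. \emph{The key lemma} to invoke is precisely the dual (column-insertion) form of the row-bumping lemma: when a weakly smaller letter is inserted after a larger one, each box of the new route is strictly on one side and weakly on the other of the corresponding box of the old route, and the new boxes added satisfy the stated ``weakly above, strictly right'' relation. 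This is stated for rows in \cite[\S 1.1]{fulton} and transposes verbatim to columns.

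\emph{The main obstacle} I anticipate is that $spo$-column insertion is not pure Schensted column insertion: a displaced entry from $B_1$ continues by column insertion, but the statement concerns only $B_1$-letters and their added boxes $S_1, S_2$, which are added by the column-insertion portion of the algorithm. I would therefore argue that, since $x_1, x_2 \in B_1$ and $B_1 > B_0$ in the order on $B$, the letters $x_1, x_2$ and every letter they displace within a column are themselves from $B_1$ until the route either terminates by adding a new box or passes into interaction with $B_0$; because both added boxes $S_1, S_2$ are produced by the $B_1$ column-insertion steps, the $B_0$/jeu-de-taquin complications of $spo$-insertion do not affect the comparison of the two routes. Once this reduction to ordinary column insertion on the $B_1$-entries is justified, the result is immediate from the transposed Fulton bumping lemma. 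I would keep this reduction brief and lean on \cite[\S 1.1]{fulton} for the underlying combinatorial comparison.
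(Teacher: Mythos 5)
Your proposal is correct and matches the paper's treatment: the paper does not prove this lemma but states that it "follows from known results about ordinary insertion into semistandard tableaux," citing \cite[\S 1.1]{fulton}, exactly the transposed row-bumping lemma you invoke. Your added observation that a $B_1$-letter column-inserted into an $spo$-tableau can only ever displace $B_1$-letters (since every entry exceeding a $B_1$-letter lies in $B_1$), so the comparison reduces cleanly to ordinary column insertion, is the correct justification for the reduction the paper leaves implicit.
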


\begin{cor}Suppose that $x_1,\ldots,x_r \in B_1$, with $x_1>x_2>\cdots>x_r$, that $T$ is an $spo$-tableau of shape $\lambda$, and  that $S=x_r \rightarrow \cdots \rightarrow x_1 \rightarrow T$ has shape $\mu$.  Then $\mu/\lambda$ is a horizontal strip.\end{cor}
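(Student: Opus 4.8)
The plan is to prove the Corollary by induction on $r$, using Lemma~\ref{flemma} as the inductive engine. The Corollary asserts that when we insert the strictly decreasing sequence $x_1 > x_2 > \cdots > x_r$ of $B_1$-entries into $T$ (in the order $x_1$ first, then $x_2$, and so on), each successive insertion adds its new box in a column strictly to the right of the previous one, so that no two of the $r$ added boxes lie in the same column; this is exactly the condition that $\mu/\lambda$ be a horizontal strip.

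First I would set up the notation: let $T_0 = T$ and $T_j = x_j \rightarrow (x_{j-1} \rightarrow \cdots \rightarrow x_1 \rightarrow T)$ for $1 \le j \le r$, so that $T_r = S$, and let $S_j$ denote the box added to the shape of $T_{j-1}$ in forming $T_j$. Since each $x_j \in B_1$, every one of these insertions is a genuine column insertion that adds exactly one box (no cancellation occurs, as cancellations only arise from $B_0$-insertions), so $\mu/\lambda$ consists of precisely the $r$ boxes $S_1, S_2, \ldots, S_r$. The goal reduces to showing that these boxes lie in $r$ distinct columns, with columns strictly increasing as $j$ increases.

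The key step is the inductive application of Lemma~\ref{flemma}. For the base case $r=1$ there is a single added box and nothing to check. For the inductive step, I would observe that Lemma~\ref{flemma} applies directly to consecutive insertions: taking $x_1$ and $x_2$ in the lemma to be our $x_{j}$ and $x_{j+1}$ (with $x_{j} > x_{j+1}$), and the intermediate tableau $x_1 \rightarrow T$ in the lemma to be our $T_{j}$, the lemma tells us that $S_{j+1}$ lies strictly right of $S_{j}$. Since this holds for every consecutive pair, the columns of $S_1, S_2, \ldots, S_r$ are strictly increasing, hence all distinct, which establishes that $\mu/\lambda$ is a horizontal strip.

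The only subtlety — and the main thing to verify rather than a genuine obstacle — is that Lemma~\ref{flemma} is stated for a single intermediate tableau, so I must confirm it legitimately applies at each stage of the iterated insertion. This is immediate because $T_{j} = x_{j} \rightarrow \cdots \rightarrow x_1 \rightarrow T$ is itself an $spo$-tableau (as guaranteed by \cite[Lemma 5.4]{benkart}), so the hypotheses of Lemma~\ref{flemma} are met with $T_{j}$ playing the role of the tableau into which $x_{j}$ and then $x_{j+1}$ are inserted. I would note that the ``weakly above'' part of Lemma~\ref{flemma} is not needed for the Corollary; only the ``strictly right'' conclusion is used, since a horizontal strip is characterized entirely by the distinctness of the columns of its boxes.
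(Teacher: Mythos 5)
Your proposal is correct and matches the paper's (implicit) argument: the paper derives this Corollary directly from Lemma~\ref{flemma} by iterating it over consecutive pairs, exactly as you do, and your observation that $B_1$-insertions never cause cancellations is the right justification for each step adding exactly one box. Nothing further is needed.
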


\begin{lem}\label{splem} Suppose that $x_1, x_2 \in B_0=\{1,\ov{1},\ldots,m,\ov{m}\}$ with $x_1 \leq x_2$ and that  neither insertion of $x_1$ into an $spo$-tableau $T$ nor insertion of  $x_2$ into the resulting tableau $T \leftarrow x_1$  introduce cancellations. Let $R_1$ be the $sp$-bumping route for $x_1$ and $R_2$ the $sp$-bumping route for $x_2$.  Then the final box in $R_2$ is  weakly above and strictly right of the final box in $R_1$.
\end{lem}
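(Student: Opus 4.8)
The plan is to prove this as the orthosymplectic analogue of the classical row-bumping lemma (see \cite[\S 1.1]{fulton}). Since neither insertion introduces a cancellation, each $sp$-bumping route is produced by ordinary row insertion acting on the symplectic part of the tableau: while a route is \emph{active} it displaces an entry of $B_0$ from each row into the next, and it terminates in a row $p$ in one of two ways, either (T1) the incoming $B_0$-entry is weakly larger than every entry of that row and lands in a new box, or (T2) the incoming $B_0$-entry displaces a $B_1$-entry and settles in its box while that $B_1$-entry is passed off to column insertion. I will write $c_i^{(j)}$ for the column of $R_j$ in row $i$ and let $p_1,p_2$ denote the rows containing the final boxes of $R_1,R_2$.

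First I would establish, by induction on the row index $i$, that in every row through which both routes are active one has $c_i^{(2)}>c_i^{(1)}$ and that the two $B_0$-values carried down into row $i+1$ remain ordered, the value descending along $R_2$ being weakly larger than the one descending along $R_1$. The base case and inductive step are identical: the row of $T\leftarrow x_1$ into which $R_2$ inserts agrees with the corresponding row used by $R_1$ except that position $c_i^{(1)}$ now holds the weakly smaller value that $R_1$ inserted there; since $x_1\le x_2$ feeds a chain of inequalities $v_{i-1}^{(1)}\le v_{i-1}^{(2)}$ among the descending values, the entry $R_2$ displaces lies strictly right of column $c_i^{(1)}$, and weak increase along rows keeps the displaced values ordered. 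I would also record the standard fact that the columns along a single $sp$-bumping route are weakly decreasing as one moves downward.

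Next I would compare the two terminal rows and show $p_2\le p_1$, so that the final box of $R_2$ is weakly above that of $R_1$. Assuming $R_2$ is still active when it reaches row $p_1$, I examine the two ways $R_1$ can have stopped there. In case (T1) the row of $T\leftarrow x_1$ ends with the value $R_1$ appended, which is weakly smaller than the value $R_2$ brings in, so $R_2$ also lands at the end of that row; in case (T2) the box at column $c_{p_1}^{(1)}$ now holds a $B_0$-entry weakly below the incoming value of $R_2$, while the subsequent column still carries a $B_1$-entry, so $R_2$ is forced to displace that $B_1$-entry and terminate in row $p_1$ as well. In either case $p_2=p_1$ with $c_{p_1}^{(2)}>c_{p_1}^{(1)}$; otherwise $R_2$ terminated earlier, at some row $p_2<p_1$, and combining $c_{p_2}^{(2)}>c_{p_2}^{(1)}$ with the weak downward decrease of the columns of $R_1$ gives $c_{p_2}^{(2)}>c_{p_2}^{(1)}\ge c_{p_1}^{(1)}$. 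Either way the final box of $R_2$ is strictly right of and weakly above the final box of $R_1$, as required.

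The step I expect to be the main obstacle is the termination comparison in case (T2): I must verify that after $R_1$ displaces a $B_1$-entry and that entry is re-inserted by column insertion, the column immediately to the right of $c_{p_1}^{(1)}$ in row $p_1$ still contains a $B_1$-entry, so that $R_2$ cannot slip past and continue downward. This rests on the facts that within each row the $B_1$-entries form the right-hand segment and that column insertion of a $B_1$-entry produces only $B_1$-entries in the affected boxes, which together guarantee that a $B_0$-entry arriving from $R_2$ meets a $B_1$-entry exactly one column further right and stops there.
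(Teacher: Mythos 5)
Your argument is correct and is essentially the approach the paper intends: the paper does not write out a proof but simply notes that Lemma~\ref{splem} follows from the classical row-bumping lemma of \cite[\S 1.1]{fulton}, and your write-up is exactly that induction on rows, supplemented with the (correct) check that a route terminating by displacing a $B_1$-entry forces the later route to terminate in the same row one column further right. The key observations you flag --- that $B_1$-entries form the right-hand segment of each row and that the column-insertion tail of the first insertion only permutes $B_1$-entries, leaving the $B_0$-positions untouched --- are precisely what is needed to reduce to the classical case.
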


\begin{lem}\label{mylemma} Suppose that $x \in B_1$ is inserted into an $spo$-tableau $T$ giving a new box $S_1$ in $x \rightarrow T$, and  insertion of $y \in B_0$ into $x \rightarrow T$ does not cause a cancellation and gives a new box $S_2$ in $(x \rightarrow T) \leftarrow y$. Then $S_2$ is weakly above and strictly right of $S_1$.  \end{lem}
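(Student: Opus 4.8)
The plan is to carry out both insertions inside the common tableau $U := x\rightarrow T$ and to split the argument according to where the row-insertion path of $y$ terminates. I would begin with a reduction. Since every entry of $B_0$ precedes every entry of $B_1$ in the order on $B$ and $x\in B_1$, the column insertion $x\rightarrow T$ can only displace entries of $B_1$: an incoming entry bumps the least strictly larger entry of a column, and no entry of $B_0$ is larger than $x$. Hence the symplectic ($B_0$) subtableau of $U$ coincides with that of $T$, while the new box $S_1$ sits at the foot of some column $c_1$ of $U$ and carries a $B_1$-entry; in particular $S_1$ is an outer corner lying strictly below the $B_0$-region of its column. Consequently, when $y\in B_0$ is row-inserted into $U$, its bumping path through the $B_0$-region is governed entirely by the (unchanged) symplectic part and behaves exactly as it would in $T$, up to the first moment, if any, at which it displaces a $B_1$-entry.

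Write $S_1=(r_1,c_1)$, $S_2=(r_2,c_2)$, and let $\mu$ be the shape of the common symplectic subtableau. In the first case the path of $y$ never leaves the $B_0$-region: a $B_0$-entry comes to rest in a new box $S_2$ at the right end of a row $r_2$ all of whose entries lie in $B_0$, so that $c_2=\mu_{r_2}+1$. Here I would argue from the partition shape $\mu$ together with the fact that $S_1$ extends a column that already carried a $B_1$-entry. By the monotonicity of bumping routes, were $S_2$ weakly left of $S_1$ the row index would satisfy $r_2>r_1$; but then the descending path of $y$ would have to pass through row $r_1$, where a $B_1$-entry sits at column $c_1$, which forces either a crossover into $B_1$ (contrary to the assumption of this case) or a symplectic cancellation (contrary to the hypothesis). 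This rules out the alternative and yields $c_2>c_1$ and $r_2\le r_1$.

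In the second case the path of $y$ reaches a row in which it displaces a $B_1$-entry $z$; from that point the algorithm switches to column insertion, and since a displaced $B_1$-entry is always reinserted into the subsequent column, the process continues as a strictly rightward chain of column insertions, terminating by depositing a $B_1$-entry in the new box $S_2$ at the foot of a column. The main obstacle is to compare this induced column-insertion chain with the chain that produced $S_1$, and I would do so by establishing a monotonicity statement for column insertion in the spirit of Lemma~\ref{flemma}, namely that the box created by the $y$-chain lies weakly above and strictly right of $S_1$. The delicate points are that the two $B_1$-values arise differently -- $x$ enters column one directly, whereas $z$ enters a column strictly to the right of where it was bumped -- so the comparison is not a literal instance of Lemma~\ref{flemma}, and one must verify that the inequality between the relevant $B_1$-values points in the direction needed to produce ``strictly right'' rather than merely ``weakly right.'' Controlling this transition across the $B_0/B_1$ interface, and checking that it dovetails with the $B_0$-only case so that every possibility yields the same conclusion, is where I expect the real work to lie.
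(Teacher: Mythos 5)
Your setup is sound --- the reduction observing that $x\rightarrow T$ leaves the $B_0$-subtableau untouched, and the split according to whether the route of $y$ crosses into the $B_1$ region, is essentially the frame the paper uses --- but both cases have problems. In your first case the stated mechanism for ruling out $r_2>r_1$ does not work: if the route of $y$ stays in $B_0$ and passes through row $r_1$, it displaces a $B_0$-entry from some column $p\le\mu_{r_1}<c_1$ and simply travels to the \emph{left} of the $B_1$-entry sitting in $S_1$; nothing about visiting row $r_1$ forces a crossover into $B_1$ or a cancellation. The correct obstruction lies at the terminal row, not at row $r_1$: a $B_0$-only route can only terminate by appending a box at the foot of a column of $U$ whose bottom entry is in $B_0$, hence a column containing no $B_1$-entry at all; but the column insertion of $x$ leaves a chain of $B_1$-entries $x_1<x_2<\cdots<x_j$ occupying columns $1,\ldots,j=c_1$, so every column weakly left of $c_1$ contains a $B_1$-entry and the terminal column must satisfy $c_2>c_1$ (the paper phrases this as: if the $sp$-bumping route ends in a column $k\le j$, its final box necessarily holds a $B_1$-entry, so this case collapses into your second case).

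The more serious gap is that your second case --- which is the actual content of the lemma --- is only described as ``where I expect the real work to lie'' and is not carried out. The paper's proof supplies exactly the missing comparison: the $B_1$-entry $a$ displaced from the final box of the $sp$-bumping route in column $k\le j$ is the topmost $B_1$-entry of that column, hence $a\le x_k$; inserting it into column $k+1$ must then displace an entry $\le x_{k+1}$ (it cannot come to rest there, since $x_{k+1}>a$ already sits in column $k+1$), and inductively the entry expelled from column $i$ is at most $x_i$ for each $i=k,\ldots,j$. Consequently the chain cannot terminate in any column weakly left of $j$, and the entry ejected from column $j$ is $\le x_j$, so it enters column $j+1$ weakly above the row of $S_1$; therefore $S_2$ lands strictly right of column $j$ and weakly above $S_1$. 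Without this monotone bound against the chain $x_1<\cdots<x_j$, the direction of the inequality at the $B_0/B_1$ interface --- the very point you flag as delicate --- is not established, so the proposal as written does not prove the lemma.
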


\begin{proof}  Let $U=x \rightarrow T$ and suppose that $S_1$ belongs to column $j$ of  $U$ and contains $x_j \in B_1$.  Then there are entries $x_1,\ldots, x_{j-1}$, where $x_i$ belongs to column $i<j$ of $U$ and $x_i \in B_1$, such that $x_1 < x_2 < \cdots < x_j$.  If the $sp$-bumping route of $y$ ends in a column strictly right of column $j$, the result follows so suppose that the $sp$-bumping route of $y$ ends in column $k$, where $k\leq j$.  Then,  since each of columns $1, \ldots, k$ contain entries from $B_1$, the entry in the final box of the $sp$-bumping route of $y$ upon insertion into $U$ bumps an entry $a \in B_1$,  with $a \leq x_k$, out of column $k$ and into column $k+1$, which then bumps an entry from $B_1$ that is less than or equal to $x_{k+1}$ into column $k+2$, et cetera.  This continues until an entry from $B_1$ that is less than or equal to $x_j$ is bumped out of column $j$ and into column $j+1$, which necessarily bumps into a box in the same row or in a row above the row containing $S_1$.  It follows that $S_2$ belongs to column strictly right of column $j$ and  is weakly above the box $S_1$. \end{proof}

\begin{lem}\label{prevlem} Suppose that $x_1, x_2 \in B_0$ with $x_1 \leq x_2$ and that  neither insertion of $x_1$ into an $spo$-tableau $T$ nor insertion of  $x_2$ into the resulting tableau $T \leftarrow x_1$  introduce cancellations.  Let $S_1$ denote the new box in $U=T \leftarrow x_1$ that results from the $x_1$ insertion and $S_2$ the new box in $U \leftarrow x_2$ that results from the $x_2$ insertion.   Then
$S_2$ is  strictly right of and weakly above $S_1$. \end{lem}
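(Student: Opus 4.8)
The plan is to track the full $spo$-insertion route of each $B_0$-letter, which decomposes into two phases, and to compare the two routes phase by phase, using Lemma~\ref{splem} for the first phase and a chain argument of the type used in Lemma~\ref{mylemma} for the second. Recall that when $x\in B_0$ is inserted without cancellation, the bumping first proceeds as a descending sequence of row bumps among $B_0$-entries: this is exactly the $sp$-bumping route, which ends at the box $P$ in which the final $B_0$-entry lands. If that last step displaces a $B_1$-entry rather than creating a new box, the insertion turns a corner at $P$ and continues as ordinary column insertion among $B_1$-entries, ending at the true new box $S$. I would first record two facts about a single insertion: (i) if no $B_1$-entry is displaced then $S=P$, while if one is displaced from a box in row $i$ then the $B_1$-phase moves weakly up and strictly right, so $S$ is weakly above and strictly right of $P$; and (ii) the $B_1$-phase only ever acts in the $B_1$-region, so every box it visits holds a $B_1$-entry and $S$ is placed at the foot of its column.

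Write $P_1,P_2$ for the final $sp$-route boxes and $S_1,S_2$ for the new boxes of the $x_1$- and $x_2$-insertions. Lemma~\ref{splem} gives that $P_2$ is weakly above and strictly right of $P_1$. I would then split into cases according to which insertions turn a corner. If neither does, then $S_i=P_i$ and the statement is exactly Lemma~\ref{splem}. If $x_1$ stays in the $B_0$-region but $x_2$ turns a corner, the inequalities chain cleanly: $S_2$ is weakly above and strictly right of $P_2$, and $P_2$ is weakly above and strictly right of $P_1=S_1$.

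The substance lies in the two remaining cases. When $x_1$ turns a corner (producing a $B_1$-box $S_1$) but $x_2$ terminates in the $B_0$-region, I must compare boxes lying in different regions and on the two different tableaux $T$ and $T\leftarrow x_1$. The key point is that $x_2$ can only append a $B_0$-entry in a row whose relevant portion is entirely $B_0$; but $x_2$'s row route runs strictly to the right of $x_1$'s, so if it stayed in the $B_0$-region past the row of $S_1$ it would have to pass through a column in which $x_1$'s column phase has just deposited a $B_1$-entry (in particular through $S_1$ itself), forcing $x_2$ to turn a corner after all. Hence the termination row of $x_2$ lies strictly above $S_1$; weakly-above then follows, and a comparison of row lengths in the Young diagram (the termination row being weakly longer than the row of $S_1$, which ends at $S_1$) yields the strict inequality on columns. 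When both insertions turn a corner, the two $B_1$ column phases begin strictly right and weakly above one another, and I would carry the relation down to the final boxes by following the displaced chains of $B_1$-entries exactly as in the proof of Lemma~\ref{mylemma}.

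I expect the main obstacle to be precisely these last two cases, where the desired monotonicity cannot be obtained by merely chaining Lemma~\ref{splem} with the single-insertion estimate. The crux is the structural observation that a $B_1$ column phase obstructs the other insertion from remaining in the $B_0$-region across the rows it has altered, together with the fact that the comparison must genuinely be made on the intermediate tableau $T\leftarrow x_1$ rather than on $T$.
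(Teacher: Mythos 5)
Your decomposition of a $B_0$-insertion into an $sp$-phase followed by a $B_1$ column phase, and your four-way case split according to which of the two insertions displaces a $B_1$-entry, is exactly the structure of the paper's proof, and your first two cases coincide with it. In the third case (where $x_1$ turns a corner and $x_2$ does not) the paper argues in the opposite order: it first obtains the column inequality, because $x_1$'s column phase forces every outer corner in columns $1,\dots,j-1$ of $U$ to hold a $B_1$-entry, so an appended $B_0$-box $S_2$ must lie in a column strictly right of $j$; the row inequality then comes for free from the fact that $S_2$ holds a $B_0$-entry while $S_1$ holds a $B_1$-entry, so $S_2$ lies above $S_1$ in the left-justified $B_0$-region. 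Your rows-first route can be completed, but note that the obstruction to $x_2$ terminating weakly below the row of $S_1$ is in general a $B_1$-entry deposited in an \emph{intermediate} column of $x_1$'s chain rather than $S_1$ itself, and excluding termination strictly below that row requires a separate argument; the paper's ordering sidesteps this.

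The one place where your sketch is missing the decisive step is the fourth case. The observation that the two column phases ``begin strictly right and weakly above one another'' does not propagate to the final boxes, because $x_1$'s column phase may end in a column $j$ strictly to the right of the column $k$ in which $x_2$'s $sp$-route terminates, i.e.\ the earlier chain can overtake the starting point of the later one. The paper isolates exactly this sub-case ($j>k$) and closes it with an entry comparison: $x_1$'s chain ejects $s_1$ from column $k$ and leaves behind $s_2<s_1$ there, so the entry $s_3$ that $x_2$ subsequently ejects from column $k$ satisfies $s_3\leq s_2<s_1$; the two continuations into columns $k+1,k+2,\dots$ are then two successive $B_1$ column insertions of a larger letter followed by a smaller one, and Lemma \ref{flemma} finishes the argument. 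Your appeal to the chain-domination idea from the proof of Lemma \ref{mylemma} is the right instinct, but without making this comparison of ejected entries explicit the case is not closed as written.
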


\begin{proof}
Suppose that neither insertion of $x_1$ into $T$ nor insertion of $x_2$ into $U$ displace entries from $B_1$. Then  $S_1$ is the final box in the $sp$-bumping route of $x_1$  and $S_2$ is the final box in the $sp$-bumping route of $x_2$ so the result follows from Lemma \ref{splem}.

Suppose that insertion of $x_1$ does not displace an entry from $B_1$, but insertion of $x_2$ into $U$ does displace an entry from $B_1$.  Then $S_1$ is the final box in the $sp$-bumping route of $x_1$ and the final box in the $sp$-bumping route of $x_2$ is strictly right of and weakly above $S_1$  by Lemma \ref{splem}.  But $x_2$ insertion bumps an entry from $B_1$ out of this final box in the $sp$-bumping route into a box in the next column that is strictly right of and weakly above it and this continues until the new box $S_2$ is added to $U$, so $S_2$  is strictly right of and weakly above $S_1$.

  If insertion of $x_1$ into $T$ displaces an entry from $B_1$, but insertion of $x_2$ into $U$ does not, then $S_2$ must belong to a column strictly right of that which contains $S_1$.  Indeed, since insertion of $x_1$ displaces an entry from $B_1$,   each outer corner in columns $1, \ldots, j-1$ of $U$ must contain an entry from $B_1$.  Since $S_2$ is an outer corner and insertion of $x_2$ does not displace an entry from $B_1$, $S_2$ must belong to a column to the right of column $j$.  Since $S_2$ contains an entry from $B_0$, while $S_1$ does not, $S_2$ must belong to a row above $S_1$.

Suppose that both insertions displace entries from $B_1$ and suppose that the final box in the $sp$-bumping route for $x_2$ belongs to column $k$.  By Lemma \ref{splem}, the final box for the $sp$-bumping route of $x_1$ is strictly left of column $k$.  If the box $S_1$ that is added to $T$ by $x_1$ insertion belongs to column $j\leq k$, then, since $x_2$ insertion bumps an entry from column $k$ into column $k+1$ and $S_1$ and $S_2$ are both at the end of columns, $S_1$ belongs to a row below $S_2$.   Otherwise, suppose that insertion of $x_1$ into $T$ adds the box $S_1$ to a column $j > k$.  Then $x_1$ insertion bumps an entry $s_1 \in B_1$ from column $k$ and into column $k+1$, replacing $s_1$ with an entry $s_2<s_1$, where $s_2 \in B_1$.  Since the final box for the $sp$-bumping route of $x_2$ is in column $k$, $x_2$ insertion displaces the smallest entry $s_3 \in B_1$ in column $k$ and bumps it into column $k+1$.  But $s_3 \leq s_2<s_1$ and the final box produced by $x_1$ insertion is determined by insertion of $s_1$ into column $k+1$ and the remaining columns of $T$, while the final box produced by $s_2$ insertion is determined by insertion $s_3$ into column $k+1$ and the remaining columns of $T \leftarrow x_1$.  The result follows from  Lemma \ref{flemma}.  \end{proof}

\begin{lem}\label{sundlem1}\cite[Lemma 3.2]{sundaram}
Suppose that $T$ is an $spo$-tableau and that $x_1, x_2 \in B_0$ with $x_1 \leq x_2$.  If insertion of $x_2$ into $T \leftarrow x_1$ causes a cancellation, then insertion of $x_1$ into $T$ also causes a cancellation. 

\end{lem}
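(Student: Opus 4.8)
\emph{The plan is to prove the contrapositive}: assuming that insertion of $x_1$ into $T$ produces no cancellation, I will show that insertion of $x_2$ into $T\leftarrow x_1$ produces none either. The first thing I would record is that a cancellation is governed entirely by the symplectic ($B_0$) part of the tableau. Indeed, an entry of $B_1$ column-inserted into an $spo$-tableau only ever displaces another entry of $B_1$, since every element of $B_0$ is smaller than every element of $B_1$; hence column insertions never alter $B_0$-entries. Moreover the $B_0$-entries occupy an initial segment of each row, so when an entry of $B_0$ is row-inserted it bumps the least $B_0$-entry exceeding it, independently of the $B_1$-entries to its right. Writing $S(T)$ for the semistandard symplectic tableau formed by the $B_0$-entries of $T$, the no-cancellation hypothesis therefore guarantees that $S(T\leftarrow x_1)$ is exactly the tableau obtained by ordinary row insertion of $x_1$ into $S(T)$, and a cancellation in either insertion occurs precisely when its $B_0$-trajectory carries a value $i$ into row $i$ and there bumps an $\ov{i}$. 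It thus suffices to argue for symplectic row insertion, which is the content of \cite[Lemma 3.2]{sundaram}; I reproduce the argument in our language.

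Next I would set up the two $B_0$-trajectories. Insert $x_1$ into $S(T)$ and let $v_j$ be the value carried into row $j$ and $p_j$ the column it bumps; insert $x_2$ into $S(T\leftarrow x_1)$ and let $v_j'$, $p_j'$ be the analogous data. Because $x_1\le x_2$ and no cancellation occurs before the row in question, both insertions are ordinary, so the classical row-bumping comparison (as in \cite[\S 1.1]{fulton}, and consistent with Lemmas \ref{splem} and \ref{prevlem}) applies row by row: one has $v_j\le v_j'$ and $p_j<p_j'$ at every row reached by both, and if the $x_1$-trajectory terminates by appending at row $j$ then so does the $x_2$-trajectory, one column to its right. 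In particular, if insertion of $x_2$ reaches a row $i$, then so does insertion of $x_1$.

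Now suppose, for contradiction, that insertion of $x_2$ cancels at row $i$; thus $v_i'=i$ and the entry $x_2$ bumps at column $p_i'$ of row $i$ of $S(T\leftarrow x_1)$ is $\ov{i}$, this being the least entry exceeding $i$. By the symplectic condition every entry of row $i$ is $\ge i$, so every column strictly left of $p_i'$ equals $i$. Since $p_i<p_i'$, column $p_i$ holds an $i$; but that column holds precisely the value $v_i$ that the $x_1$-insertion placed there, whence $v_i=i$. Because $x_1$ altered only column $p_i$ of row $i$ and $p_i<p_i'$, every column of row $i$ of $S(T)$ strictly left of $p_i'$ equals $i$ except column $p_i$, which holds the entry $w$ that $x_1$ bumps; as $w>i$ and $w$ is weakly less than the $\ov{i}$ at column $p_i'$, and since $\ov{i}$ is the immediate successor of $i$ in $B_0$, we get $w=\ov{i}$. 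Thus the $x_1$-insertion carries value $i$ into row $i$ and bumps an $\ov{i}$ from it ---a cancellation--- contradicting our hypothesis. Hence $x_2$ cannot cancel, which is the desired contrapositive.

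\emph{The main obstacle} is the bookkeeping in the third paragraph that forces $v_i=i$ and pins the bumped entry down as $\ov{i}$: this is exactly where the symplectic condition (entries of row $i$ are $\ge i$) must be combined with the per-row position comparison $p_i<p_i'$, and it is what makes the two insertions cancel at the \emph{same} row rather than merely somewhere. A secondary point requiring care is the reduction to the symplectic part: one must verify that $B_1$-column insertions, the settling of a $B_0$-entry against a $B_1$-entry (which corresponds to appending in $S$), and jeu de taquin slides --- which under the no-cancellation hypothesis do not occur for $x_1$ --- genuinely leave the $B_0$-trajectory and its cancellation test unchanged.
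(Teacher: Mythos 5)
Your argument is correct, but note that the paper itself offers no proof of Lemma \ref{sundlem1}: it is stated as an import of \cite[Lemma 3.2]{sundaram}, which is a statement about symplectic tableaux and Berele insertion, and the transfer to $spo$-tableaux is left implicit. Your write-up therefore supplies two things the paper does not. First, the reduction: you verify that column insertion of $B_1$-entries never disturbs $B_0$-entries, that the $B_0$-entries form an initial segment of each row so the row-bumping trajectory of an element of $B_0$ is computed entirely inside the symplectic part $S(T)$ (with ``settling against a $B_1$-entry'' playing the role of appending in $S(T)$), and that under the no-cancellation hypothesis $S(T\leftarrow x_1)$ is the ordinary row insertion $S(T)\leftarrow x_1$; this is exactly the bridge needed to invoke a purely symplectic lemma for $spo$-tableaux, and it is worth making explicit. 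Second, you reprove the symplectic statement itself by the contrapositive, combining the classical row-bumping comparison ($v_j\le v_j'$, $p_j<p_j'$ from \cite[\S 1.1]{fulton}) with the symplectic condition to force $v_i=i$ and to pin the entry bumped by $x_1$ at $(i,p_i)$ down to $\ov{i}$, since $\ov{i}$ is the immediate successor of $i$; this is essentially Sundaram's argument and it is sound. One small imprecision: your parenthetical claim that when the $x_1$-trajectory terminates by appending at row $j$ the $x_2$-trajectory also terminates at row $j$ ``one column to its right'' is not quite right (the second trajectory may terminate strictly higher), but the only consequence you use --- that any row reached by the $x_2$-trajectory is also reached by the $x_1$-trajectory --- is exactly what the row-bumping lemma gives, so nothing breaks.
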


\begin{cor}  Let $T$ be an $spo$-tableau and $U$ a one-row $spo$-tableau. Entries in $U$ that cause cancellations when $T \cdot U$ is constructed occur in an initial strip at the beginning of $U$.

\end{cor}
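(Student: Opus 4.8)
The plan is to reduce the statement to a repeated application of Lemma \ref{sundlem1}. First I would observe that cancellations are purely a phenomenon of the $B_0$ (row) insertions: the $B_1$ entries $b_1 < \cdots < b_t$ are handled by ordinary column insertion, which always adds a box and never removes one, so none of them can cause a cancellation. Hence the only entries of $U$ that can produce a cancellation are the $a_i \in B_0$, and these already sit at the front of $U$; it remains to show that the ones which actually cancel form an initial segment $a_1, \ldots, a_s$ of the $a$-part.

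Next I would fix notation following the definition of $T \cdot U$: let $T' = b_1 \rightarrow \cdots \rightarrow b_t \rightarrow T$ be the $spo$-tableau obtained after all the column insertions (it is an $spo$-tableau by \cite[Lemma 5.4]{benkart}), put $T_0 = T'$, and define $T_j = T_{j-1} \leftarrow a_j$ for $1 \leq j \leq r$, so that $T \cdot U = T_r$. Let $C \subseteq \{1, \ldots, r\}$ denote the set of indices $j$ for which the insertion of $a_j$ into $T_{j-1}$ causes a cancellation.

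The key step is to apply Lemma \ref{sundlem1} to each consecutive pair. Since $a_j \leq a_{j+1}$, taking $T = T_{j-1}$, $x_1 = a_j$, and $x_2 = a_{j+1}$ in the lemma shows that if the insertion of $a_{j+1}$ into $T_j = T_{j-1} \leftarrow a_j$ causes a cancellation, then the insertion of $a_j$ into $T_{j-1}$ also causes one; that is, $(j+1) \in C$ implies $j \in C$. A downward induction then yields that $C$ is an initial segment $\{1, \ldots, s\}$ (with $s = 0$ corresponding to $C = \emptyset$), so the entries of $U$ producing cancellations occur in an initial strip at the beginning of $U$.

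I expect the only point requiring care — and it is minor — to be the bookkeeping that the hypotheses of Lemma \ref{sundlem1} genuinely hold at each stage: each $T_{j-1}$ is a legitimate $spo$-tableau, the inequality $a_j \leq a_{j+1}$ follows from the ordering of the $a$-part of $U$, and the cancellation considered for $a_{j+1}$ is precisely its insertion into $T_j = T_{j-1} \leftarrow a_j$. There is no real analytic obstacle here; the content is carried entirely by Sundaram's lemma, and the corollary is essentially its iterate along the chain of row insertions that builds $T \cdot U$.
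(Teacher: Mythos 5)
Your proposal is correct and follows the paper's intended route exactly: the paper states this as an immediate corollary of Lemma \ref{sundlem1} with no written proof, and your argument---noting that the column insertions of the $B_1$ entries never cancel (a $b_i \in B_1$ can only displace larger entries, which again lie in $B_1$), and then iterating Sundaram's lemma along the chain $T_j = T_{j-1} \leftarrow a_j$ to show the cancellation set is downward closed---is precisely that intended argument.
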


\begin{lem} \label{rowonelem} Suppose that $x_1 \in B_1$ and that insertion of $x_1$ into $T$ adds a new box to row one of the Young diagram of $T$.  Then if $x_2 \in B_0$ and insertion of $x_2$  into  $U=x_1 \rightarrow T$ causes a cancellation, the tableau $U \leftarrow x_2$ and $U$ have the same number of boxes in the first row.  In other words, the cancellation does not result in the removal of a box from the first row.
\end{lem}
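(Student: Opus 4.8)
The plan is to follow the single box that the cancellation removes and to show it can never come out of the first row. I will reduce to the one dangerous configuration and then eliminate it using the structure of the column-insertion of $x_1$ that produced the new box in row one. Throughout, $U_{i,j}$ denotes the entry of $U$ in row $i$, column $j$, and $\lambda$ is the shape of $U$.

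\emph{First reduction.} A cancellation is triggered at the least $i$ for which an $i\in B_0$ bumps an $\ov{i}$ out of the $i$th row, and the empty box is created in that $i$th row. Since jeu de taquin forward slides move the empty box only rightward or downward, the box finally removed lies weakly below row $i$; hence for $i\ge 2$ the first row is untouched. The letter that reaches the $i$th row equals $i$, and the letter first inserted into row one is $x_2$ itself, so a cancellation in row one forces $i=1$, i.e.\ $x_2=1$, with $\ov{1}$ the least entry of row one exceeding $1$. It therefore suffices to treat this case: inserting $1$ removes an $\ov{1}$ from a cell $(1,p)$, leaving an empty box there.

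\emph{Second reduction.} Because $x_1\in B_1$ was column-inserted and added a box to row one, that box sits at $(1,\lambda_1)$ and is an outer corner, so $\lambda_2<\lambda_1$ and $(1,\lambda_1)$ is the \emph{only} outer corner in row one. As the empty box never moves up, row one can shrink only if the empty box slides rightward at every step from $(1,p)$ to $(1,\lambda_1)$, never descending. While this happens the entries of row two are not yet disturbed, so the relevant comparisons use the original entries of $U$, and a rightward slide at $(1,j)$ requires $U_{1,j+1}\le U_{2,j}$. I would show this cannot persist all the way to $(1,\lambda_1)$.

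\emph{Main step.} Here I exploit the insertion chain of $x_1$. Write $e_0=x_1<e_1<\cdots<e_c=z$, where $c=\lambda_1-1$ and $z$ is the new entry, for the successively bumped letters, all in $B_1$; the chain deposits $e_{j-1}$ in column $j$ at a row $r_j$, with $r_1\ge r_2\ge\cdots\ge r_{c+1}=1$ and $U_{1,\lambda_1}=e_c=z$. Let $j_0$ be the least $j$ with $r_j=1$. Then $U_{1,j}=e_{j-1}\in B_1$ for all $j\ge j_0$, so every $B_0$ entry of row one, in particular the $\ov{1}$ at $(1,p)$, lies in a column $<j_0$; thus $p\le j_0-1\le c$, and since $r_j\ge 2$ for $j\le j_0-1$, every such column carries a row-two entry, so the trajectory stays well defined up to column $j_0-1$. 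If the empty box ever reaches $(1,j_0-1)$ and slides right, then $e_{j_0-1}=U_{1,j_0}\le U_{2,j_0-1}$; but column $j_0-1$ is weakly increasing down to its chain entry $e_{j_0-2}$, so $U_{2,j_0-1}\le e_{j_0-2}<e_{j_0-1}$, a contradiction. Hence the empty box must descend at or before column $j_0-1$, the removed box lies in a row $\ge 2$, and row one keeps its length.

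I expect the main step to be the crux: choosing the right invariants of the $x_1$-chain (the weakly decreasing rows $r_j$ together with the strictly increasing bumped values $e_j$) and locating column $j_0-1$, where the forced slide-right inequality collides with weak monotonicity down a column. By contrast the two reductions are routine once one observes that a cancellation is born in row $i$ and that the box added by $x_1$ is the unique outer corner of row one.
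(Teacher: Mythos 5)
Your proof is correct and follows essentially the same route as the paper: your column $j_0$ is the paper's leftmost column $k$ in which a $B_1$ entry bumps into row one during the $x_1$ insertion, and your contradiction at column $j_0-1$ (the slide-right inequality against $U_{2,j_0-1}\le e_{j_0-2}<e_{j_0-1}$) is exactly the paper's observation that the slide would place the entry $a$ of column $k$ above the smaller entry $b$ of column $k-1$. The preliminary reductions you spell out (that the cancellation must be triggered at $i=1$ and that the hole would have to traverse row one without descending) are left implicit in the paper but are part of the same underlying argument.
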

\begin{proof}
The added box in  row one of $U$ contains an entry from $B_1$.  Suppose that column $k$ is the leftmost column in which an entry from $B_1$ bumps into the first row during the $x_1$ insertion process and let $a \in B_1$ denote the entry that belongs to column $k$ in the first row of $U$.  Then column $k-1$ of $U$ contains an entry $b \in B_1$, where $b<a$.  
Suppose that insertion of $x_2$ into $U$ causes a cancellation and that moving the resulting empty box through the tableau via jeu de taquin results in the removal of a box from row one of $U$.  Then the cancellation must occur in the first row of $T$ and the resulting empty box moves to the end of row one via jeu de taquin.  This then moves the entry $a$ in row one  above the entry $b$, which is a contradiction, since $b<a$.  \end{proof}

\begin{lem}\label{sundlem4}\cite[Lemma 3.3]{sundaram}  Suppose that $x_1, x_2 \in B_0$ with $x_1 \leq x_2$ and that $T$ is a symplectic tableau.  Suppose that $x_1$ causes a cancellation when inserted into $T$ via the Berele insertion algorithm  and that $x_2$ causes a cancellation when inserted into the tableau $T \leftarrow x_1$.  Then the box removed from $T \leftarrow x_1$ by insertion of $x_2$  sits weakly below and strictly left of the  box removed from $T$ by insertion of $x_1$ into $T$.\end{lem}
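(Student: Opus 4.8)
The plan is to reduce the two-dimensional comparison to a single inequality, and then to establish that inequality by tracking the two cancellation routes. Write $\lambda$, $\lambda'$, $\lambda''$ for the shapes of $T$, $T\leftarrow x_1$, and $(T\leftarrow x_1)\leftarrow x_2$, and let $B_1$ and $B_2$ denote the boxes removed by the $x_1$- and $x_2$-cancellations, respectively. Because a cancellation first deletes a barred entry $\overline p$ and then carries the resulting hole through the tableau by forward slides until it reaches an outer corner, the only net change to the shape is the loss of that outer corner; thus $\lambda'=\lambda\setminus\{B_1\}$ and $\lambda''=\lambda'\setminus\{B_2\}$, and all three are genuine Young diagrams. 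This rigidity is the key structural fact I would exploit.

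First I would observe that, given this rigidity, it suffices to prove the single statement that $B_2$ lies weakly below $B_1$ (equivalently, in a strictly smaller column). Writing $B_1=(\rho_1,\gamma_1)$ and $B_2=(\rho_2,\gamma_2)$: if $\gamma_2<\gamma_1$, then since the conjugate parts of a partition are weakly decreasing and removing $B_1$ from column $\gamma_1$ does not alter column $\gamma_2$, we get $\rho_2=(\lambda')^t_{\gamma_2}=\lambda^t_{\gamma_2}\ge\lambda^t_{\gamma_1}=\rho_1$, so $B_2$ is weakly below and strictly left of $B_1$. Conversely, if $\rho_2\ge\rho_1$, then either $B_2$ lies in the same row as $B_1$, forcing $\gamma_2=\gamma_1-1$ once $B_1$ has been removed, or $B_2$ lies strictly below $B_1$ and is a corner of $\lambda$ itself, whence $\gamma_2<\gamma_1$ because the corners of $\lambda$ strictly decrease in column as the row increases. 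Hence ``weakly below'', ``strictly left'', and the full claim are all equivalent, and the two offending configurations -- $B_2$ directly above $B_1$ in the same column, or $B_2$ weakly above and strictly right of $B_1$ -- are precisely the ones eliminated by this one inequality.

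It then remains to show $\rho_2\ge\rho_1$, for which I would compare the two cancellation routes directly. Let the $x_1$-cancellation remove $\overline p$ from box $(p,c_1)$ of $T$ and slide the hole to $B_1$, and let the $x_2$-cancellation remove $\overline q$ from box $(q,c_2)$ of $T\leftarrow x_1$ and slide the hole to $B_2$; since forward slides move a hole weakly down and weakly right, each $B_i$ is weakly below and weakly right of its starting box. Using $x_1\le x_2$ and the monotonicity of row-insertion bumping routes (Lemma \ref{splem} in the non-cancelling symplectic case, together with the standard facts in \cite[\S1.1]{fulton}), I would argue that the barred entry $\overline q$ removed by $x_2$ sits weakly below and weakly left of the barred entry $\overline p$ removed by $x_1$, and then track the two forward-slide paths through $T$ and $T\leftarrow x_1$ -- whose entries differ only along the $x_1$-bumping route and the first slide path -- to conclude that the second path terminates no higher than the first. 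The decisive local phenomenon, already visible in small examples, is that the barred entries consumed by the first cancellation are exactly the small entries that would otherwise have let the second hole slide right into column $\gamma_1$; with them gone, the second hole is forced to descend and lands weakly below $B_1$.

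The main obstacle is precisely this last step: controlling the interaction of the two forward-slide paths across the two distinct tableaux $T$ and $T\leftarrow x_1$, especially when the cancellation rows $p$ and $q$ coincide or are adjacent and a slide may legitimately go either right or down, which requires comparing box by box the entries available to each slide before and after the first cancellation. Conceptually, the reversal of the inequality relative to Lemma \ref{splem} (weakly below and strictly left, rather than weakly above and strictly right) reflects that a cancellation is a reverse insertion: undoing the two cancellations restores the boxes $B_2$ then $B_1$, and it is to this reverse process that the ``weakly above, strictly right'' monotonicity applies, producing the flipped statement for the removed boxes.
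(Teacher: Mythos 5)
First, a point of comparison: the paper does not prove this lemma at all --- it is quoted verbatim from Sundaram \cite[Lemma 3.3]{sundaram}, so the only ``proof'' the paper offers is a citation. Your second paragraph is a correct and genuinely useful reduction that is not in the paper: since a cancellation deletes exactly one outer corner, the three shapes $\lambda\supset\lambda'\supset\lambda''$ are all partitions, and for an outer corner of $\lambda$ in position $(\rho_1,\gamma_1)$ and an outer corner of $\lambda'=\lambda\setminus\{(\rho_1,\gamma_1)\}$ in position $(\rho_2,\gamma_2)$, the column-length comparison you give does show that ``$\rho_2\ge\rho_1$'', ``$\gamma_2<\gamma_1$'', and their conjunction are all equivalent. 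So the lemma really does reduce to the single inequality $\rho_2\ge\rho_1$.

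The problem is that this inequality is the entire content of the lemma, and your third paragraph does not prove it: it names a strategy (``track the two forward-slide paths''), and your fourth paragraph then explicitly identifies that tracking as the unresolved obstacle. Two separate claims are left unestablished: (i) that the barred letter $\overline{q}$ cancelled by $x_2$ sits weakly below and weakly left of the $\overline{p}$ cancelled by $x_1$ --- this does not follow from Lemma \ref{splem}, which assumes no cancellation occurs, and needs its own argument about how the least violated row can move after the first insertion has already altered $T$; and (ii) that, granting (i), the second hole's slide terminates weakly below the first hole's, which is delicate precisely because the two slides take place in different tableaux ($T$ versus $T\leftarrow x_1$) that differ along the whole $x_1$-bumping route and the first slide path. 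Your closing heuristic that a cancellation is a reverse insertion to which the ``weakly above, strictly right'' monotonicity applies is suggestive but is not an argument: the reverse of a Berele cancellation is not the row insertion of a single letter, and one would still have to verify that the reversed operations occur in an order and on tableaux to which Lemma \ref{splem} or Lemma \ref{flemma} actually applies. As it stands the proposal is a correct reduction plus a plan; to complete it you would need to carry out the case analysis of Sundaram's Lemma 3.3, or simply cite it as the paper does.
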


\begin{lem} Suppose that $x_1, x_2 \in B_0$ with $x_1 \leq x_2$ and suppose that $x_1$ causes a cancellation when inserted into an $spo$-tableau $T$ and that $x_2$ causes a cancellation when inserted into $U=T \leftarrow x_1$.  Then the box removed from $U$ by insertion of $x_2$  belongs to a different column than the box removed from $T$ by insertion of $x_1$.

\end{lem}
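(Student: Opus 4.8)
The plan is to reduce the two orthosymplectic cancellations to the corresponding symplectic cancellations and then to control the extra forward slides through the row-strict part. Write $S$ for the symplectic ($B_0$) sub-tableau of $T$; since $B_0<B_1$, the entries of $S$ occupy a Young subdiagram $\mu$ in the upper-left of $T$. The first observation is that, as long as insertion of a letter from $B_0$ has not yet removed a barred letter, the Berele row-insertion proceeds entirely inside $S$: the least entry exceeding the inserted letter in each row is a letter of $B_0$, for otherwise a letter of $B_1$ would be displaced and no cancellation could occur. Hence the barred letter is removed from a cell of $S$, and the ensuing forward slides agree with the purely symplectic slides until the empty box first reaches an outer corner of $\mu$. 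I would denote this outer corner by $P^{sp}$ and check that it is precisely the box removed by the symplectic Berele insertion into $S$; moreover the symplectic sub-tableau of $U=T\leftarrow x_1$ is then $S\leftarrow x_1$. Applying Lemma \ref{sundlem4} to $S$ shows that the symplectic corner $P_2^{sp}$ for $x_2$ is weakly below and strictly left of the symplectic corner $P_1^{sp}$ for $x_1$; in particular $P_1^{sp}$ and $P_2^{sp}$ lie in distinct columns.

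The subtlety is that the box actually removed need not be $P^{sp}$: on reaching $P^{sp}$ the empty box may continue through the $B_1$-region, whose entries strictly increase along rows and weakly increase down columns, until it reaches an outer corner of $\lambda$. I would analyze this continuation at the exit corner $(\rho,\gamma)=P^{sp}$. If row $\rho$ of $\lambda$ has no box to the right of column $\gamma$, the empty box can only descend, so it slides straight down column $\gamma$ and the removed box stays in column $\gamma$. If instead a box of $B_1$ lies to the right, the slide may turn rightward and the column of the removed box strictly increases. In either case the column of the removed box $P$ is at least that of $P^{sp}$, with equality exactly when the slide descends. Thus the strict-left relation of the $P_i^{sp}$ does not by itself force $P_1$ and $P_2$ into distinct columns.

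To conclude I would argue by contradiction. Suppose $P_1$ and $P_2$ lie in a common column $d$. Since $P_1$ is removed from $T$ first and $P_2$ is then removed from $U$, whose shape is that of $T$ with the corner $P_1$ deleted, the box $P_2$ must be the new bottom of column $d$, that is, $P_2$ sits immediately above $P_1$. Combining this with $\mathrm{row}(P_2^{sp})\ge \mathrm{row}(P_1^{sp})$ and with $\gamma_2<\gamma_1\le d$ from the previous paragraph forces both slides to terminate in column $d$, with the slide for $x_2$ having turned strictly rightward into it, and I would then trace the two slide paths to contradict the column-weak and row-strict conditions on the $B_1$-region. The main obstacle is exactly this last step: the two forward slides take place in the different tableaux $T$ and $U$, and the slide for $x_1$ has already displaced the very $B_1$-entries that the slide for $x_2$ later encounters, so the interaction of the two paths must be controlled carefully to rule out the ``immediately above'' configuration.
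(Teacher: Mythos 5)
Your first step matches the paper's: you identify the last boxes $S_1$ and $S_2$ from which a $B_0$-entry slides out (your $P_1^{sp}$, $P_2^{sp}$), observe that up to that point the cancellation behaves purely symplectically, and invoke Lemma \ref{sundlem4} to place $P_2^{sp}$ weakly below and strictly left of $P_1^{sp}$, hence in a different column. You also correctly identify the real difficulty: when the vacated box is not yet an outer corner, the forward slides continue through the $B_1$-region, and the strict-left relation between $P_1^{sp}$ and $P_2^{sp}$ does not by itself prevent the two slide paths from terminating in the same column.

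That last step is where your argument has a genuine gap. Your proposed contradiction reduces the bad case to the configuration in which $P_2$ sits immediately above $P_1$ in a common column $d$, but you then say only that you ``would trace the two slide paths'' and explicitly concede that controlling the interaction of the two slides --- which occur in the different tableaux $T$ and $U$, the first slide having already rearranged the $B_1$-entries the second slide meets --- is the main obstacle. This is precisely the content that must be supplied, and a bare contradiction chase through the row-strict region is delicate for exactly the reason you name. The paper closes this gap by appealing to van Leeuwen's tableau-switching results \cite{leeuwen}: starting from two vacated boxes in distinct columns (with the second weakly below and strictly left of the first), the subsequent jeu de taquin slides through the $B_1$-portion send the two empty boxes to outer corners in distinct columns. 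Without that citation or an equivalent self-contained argument about successive slide paths, your proof does not establish the conclusion.
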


\begin{proof}

The empty box caused by cancellation when $x_1$ is inserted into $T$ slides from the first column through the $sp$-portion of the tableau and into the remaining portion of the tableau via jeu de taquin.    Let $S_1$ denote the last box in the jeu de taquin path of the empty box such that an entry from  $B_0$ slides out of $S_1$.  Then  $S_1$ either belongs to an outer corner of $T$, in which case the insertion procedure is complete or an entry from $B_1$ slides into $S_1$.  If $S_2$ is the last box in the jeu de taquin path of the empty box created by $x_2$ such that an entry from  $B_0$ slides out of $S_2$, then $S_2$ is weakly below and strictly left of the box $S_1$ by Lemma \ref{sundlem4}  so the two boxes belong to different columns of $T$.  If one or both boxes are the boxes removed through orthosymplectic insertion via $x_1$ and $x_2$ insertion respectively, the result follows.    If the jeu de taquin process continues through the remaining portion of the tableau by sliding entries from $B_1$ into the boxes $S_1$ and $S_2$ and continuing, the two vacant boxes migrate to outer corners in different columns.  This follows from \cite{leeuwen}, where the author proves jeu de taquin results through tableau switching.  \end{proof}

\begin{lem}\label{lastlem}
Suppose that $y \in B_1$ is inserted into an $spo$-tableau $T$ and that $x_1,\ldots,x_r \in B_0$ with $x_1\leq \ldots \leq x_r$ are subsequently inserted into the remaining tableaux producing $(y \rightarrow T) \leftarrow x_1 \leftarrow \cdots  \leftarrow x_{r-1} \leftarrow x_r$ and suppose that $x_{r-1}$ causes a cancellation while $x_r$ does not.  Then insertion of $x_r$ into
$(y \rightarrow T) \leftarrow x_1 \leftarrow \cdots \leftarrow x_{r-1}$ produces a new box in a column that is strictly right of the box added by insertion of  $y$ into $T$.

\end{lem}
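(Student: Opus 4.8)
This lemma is the cancellation-interposed analogue of Lemma~\ref{mylemma}: where that lemma compares the box added by a single $B_1$ column-insertion with the box added by an immediately following non-cancelling $B_0$ row-insertion, here the same comparison is made after a run of $B_0$ insertions that \emph{do} cancel. So my plan is to recover the mechanism of Lemma~\ref{mylemma} and then argue that the configuration it relies on survives the intervening cancellations. First I would normalize the setup. Since $x_{r-1}\le x_r$ and $x_{r-1}$ cancels while $x_r$ does not, the Corollary following Lemma~\ref{sundlem1} forces each of $x_1,\dots,x_{r-1}$ to cancel, so that $x_r$ is precisely the first insertion in the run that does not cancel. Next I would record the effect of the $y$-insertion. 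Because $y\in B_1$ and every column entry exceeding $y$ lies in $B_1$, the column-insertion chain of $y$ displaces only $B_1$ entries; it visits columns $1,2,\dots,c$ in turn and appends the new box $S_y$ at the foot of column $c$. Consequently, in $y\to T$ each of the columns $1,\dots,c$ carries a $B_1$ entry, and in particular its bottom entry lies in $B_1$. This is the exact ``$B_1$-barrier'' exploited in the proof of Lemma~\ref{mylemma}.

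The heart of the argument is then a dichotomy for the non-cancelling insertion of $x_r$ into $V=(y\to T)\leftarrow x_1\leftarrow\cdots\leftarrow x_{r-1}$, run against this barrier. If the bumping route of $x_r$ terminates inside the symplectic region, its final box $S_r$ holds a $B_0$ entry sitting at the foot of its column; since $B_0<B_1$, that column can contain no $B_1$ entry, so $S_r$ must lie strictly to the right of column $c$. If instead $x_r$ meets the $B_1$ region, it bumps a $B_1$ entry which is then column-inserted, and $S_r$ is the terminal box of that column-insertion; here I would run the column-bumping chain argument of Lemma~\ref{mylemma} (and the monotonicity of Lemma~\ref{flemma}) to drive $S_r$ strictly to the right of column $c$. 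Either way $S_r$ occupies a column strictly right of $S_y$, which is the assertion. Thus the whole lemma reduces to showing that the barrier configuration established in $y\to T$ is still present in $V$ at the moment $x_r$ is inserted.

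The main obstacle is exactly this persistence. A cancellation removes an outer-corner box that, after the hole slides out of the symplectic region, typically lies in the $B_1$ part of the tableau, so a priori one of the $r-1$ cancellations could strip the $B_1$ entry from some column among $1,\dots,c$ and expose a $B_0$ foot that $x_r$ could then reach from the left. To rule this out I would use Lemma~\ref{sundlem4}, by which the successively removed boxes move weakly downward and strictly leftward, together with the preceding (unlabelled) lemma showing these removed boxes lie in distinct columns, to constrain where the barrier can possibly be eroded; Lemma~\ref{rowonelem} disposes of the delicate first-row case, since it guarantees that a cancellation never deletes a box from the row into which $y$ may have landed. The genuinely subtle point is the interaction between the jeu de taquin slides of a cancellation and the $B_0/B_1$ interface, and this is where I would invoke the tableau-switching results of~\cite{leeuwen} to certify that the $B_1$-barrier in columns $1,\dots,c$ is preserved in $V$. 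Once persistence is in hand, the dichotomy of the second paragraph completes the proof.

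I expect the tableau-switching step to be the crux: the clean column-ordering argument of Lemma~\ref{mylemma} is local to a single $B_1$/$B_0$ pair, whereas here I must carry that local structure through a sequence of non-local jeu de taquin moves, and it is conceivable that a naive barrier is eroded at an intermediate stage yet restored by the time $x_r$ is inserted. Making the invariant robust enough to survive every intermediate tableau, rather than merely $V$ itself, is the part of the argument I would need to set up most carefully.
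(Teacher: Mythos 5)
Your overall architecture matches the paper's: identify a ``$B_1$-barrier'' in columns $1,\dots,j$ created by the $y$-insertion, show it survives the intervening cancellations, and then run a dichotomy on where the $sp$-bumping route of $x_r$ terminates. But the proposal leaves its central step unproven. You correctly isolate the persistence of the barrier as the crux, and then you do not supply an argument for it: you gesture at Lemma~\ref{sundlem4}, the distinct-columns lemma, Lemma~\ref{rowonelem} and the tableau-switching results of \cite{leeuwen}, and you explicitly concede that the invariant might be ``eroded at an intermediate stage'' and that this is the part you ``would need to set up most carefully.'' Tracking \emph{which boxes are removed} by successive cancellations is also the wrong handle on the problem: the danger is not that a $B_1$ entry is deleted (jeu de taquin never deletes entries, only a box at an outer corner), but that a forward slide moves a $B_1$ entry \emph{leftward out of its column}. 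The paper's resolution is a specific elementary invariant you do not find: choose $a_1<a_2<\cdots<a_j$ with $a_i\in B_1$ in column $i$, $a_1$ minimal in column one and each $a_{i+1}$ minimal in column $i+1$ subject to $a_i<a_{i+1}$; then $a_{i+1}$ sits weakly above $a_i$, and no forward slide can push $a_i$ into column $i-1$, since it would land above $a_{i-1}$ with $a_{i-1}<a_i$, violating weak increase down columns. This one observation disposes of persistence for every intermediate tableau at once, with no appeal to \cite{leeuwen}.

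There is a second, related gap: your barrier --- ``each of columns $1,\dots,c$ carries a $B_1$ entry'' --- is too weak to run the dichotomy even if it persists. When the $sp$-bumping route of $x_r$ ends in column $i\le j$ and displaces the smallest $B_1$ entry of that column, you must rule out that this displaced entry is \emph{appended at the bottom} of some column $i'\le j$ to its right, which would put the new box weakly left of column $j$. Mere existence of a $B_1$ entry in each column does not prevent this; what prevents it is that the displaced entry is $\le a_i$ while column $i+1$ contains $a_{i+1}>a_i$, and so on across the increasing chain up to column $j$. So the strengthened chain structure is needed twice --- once for persistence and once for the cascade --- and without it both halves of your argument are incomplete.
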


\begin{proof}
Suppose that insertion of $y$ augments the tableau $T$ by a box in column $j$.  Then each of columns $1$ through $j$ in $y \rightarrow T$ contain an entry from $B_1$. Furthermore, if $a_1$ is the smallest entry from $B_1$ that belongs to the first column of $y \rightarrow T$, then entries $a_2, \ldots, a_j \in B_1$ can be chosen, where $a_i$ in column $i$, with $a_1 <a_2< \ldots < a_j$.  Take $a_2$ in   the second column of $y \rightarrow T$ to be minimal with the property that  $a_1 <a_2$ and, generally, take $a_{i+1}$ to be minimal in column $i+1$ with the property that $a_i <a_{i+1}$, for $2 \leq i \leq j$.  Then $a_{i+1}$ belongs to a row weakly above $a_i$ in $y \rightarrow T$.  When a cancellation occurs and an empty box in column one slides through the tableau, none of the entries $a_i$ move out of their respective columns since, otherwise, an entry $a_i$ would slide into column $i-1$ above $a_{i-1}$, which is impossible.  Thus, if $S=(y \rightarrow T) \leftarrow x_1\cdots \leftarrow x_{r-1}$ then $a_i$ also belongs to column $i$ of $S$ for $1 \leq i \leq j$.

If the final box in the $sp$-bumping route for  $x_r$ insertion into $S$ belongs to a column strictly right of column $j$, the result follows so suppose that the final box in the $sp$-bumping route for  $x_r$  is in column $i$, which is weakly left of column $j$. Then an entry from $B_0$ displaces an entry from  $B_1$ that is in this final box in column $i$ and moves it into column $i+1$.   Since the displaced entry is the smallest entry in column $i$ that belongs to  $B_1$, it is less than or equal to $a_i$ and cannot land at the bottom of column $i+1$ since there is at least one entry in column $i+1$ - namely $a_{i+1}$ - that is larger than it.  Since there are entries $a_{i+1}<a_{i+2}<\cdots < a_j$ in each of the subsequent columns, which belong to $B_1$, an entry in $B_1$ cannot get appended to the bottom of any column prior to, and including, column $j$.  Thus the new box belongs to a column strictly right of column $j$.  
\end{proof}

The above lemmas show that the shape $\nu$ of $T \cdot U$ takes a particular form.

\begin{thm}\label{shapethm} Let $T$ be an $spo$-tableau of shape $\lambda$ and $U$ a semistandard one-row $(k)$-tableau containing entries $x_s \leq  \cdots \leq x_1 \in B_0$ and $a_1 <\cdots < a_{m_1} \in B_1$.  Then $T \cdot U$ is formed in three stages, producing intermediate tableaux of shapes $\mu^\circ$ and $\mu^-$:  an addition stage that yields a tableau  $T_{\mu_0}=a_1 \rightarrow \cdots \rightarrow a_{m_1} \rightarrow T$, then a cancellation stage, which produces  $T_{\mu^-}=T_{\mu_0}\leftarrow  x_s \leftarrow \cdots \leftarrow x_{m_2+1}$, for some $0 \leq m_2 \leq s$, followed by another addition stage, which gives $T_{\nu}=T_{\mu^-} \leftarrow x_{m_2} \leftarrow \cdots \leftarrow x_1$.  Furthermore, $T \cdot U$ has shape $\nu$, where $\nu$  is the end shape in a sequence of Young diagrams
$(\lambda,\mu^\circ,\mu^-,
\nu)$ that satisfies the following properties:
\begin{enumerate} 
\item $\mu^\circ/\lambda$, $\mu^\circ/\mu^-$ and $\nu/\mu^-$ are horizontal strips;
\item  $\vert \mu^\circ/\lambda \vert =m_1$, for some $0 \leq m_1 \leq k$;
\item $\vert \mu^\circ/\mu^-\vert$=$\ell$, for some $0 \leq \ell \leq k-m_1$, and if the first rows of $\lambda$ and $\mu^0$ do not contain the same number of boxes then the first rows of $\mu^\circ$ and $\nu$ do contain the same number of boxes;
\item $\vert \nu / \mu^-\vert=k-m_1-\ell$ and the leftmost box in $\nu/\mu^-$ belongs to a column that is strictly right of the rightmost box in $\mu^0 / \lambda$.

\end{enumerate}
 \end{thm}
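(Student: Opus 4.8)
\emph{Proof plan.} The plan is to read Theorem \ref{shapethm} as a bookkeeping statement that threads the previously established lemmas through the three stages of the construction of $T\cdot U$. First I would fix notation, writing $s$ for the number of $B_0$ entries of $U$ so that $s=k-m_1$, and invoke the Corollary following Lemma \ref{sundlem1} to see that the row-inserted letters causing cancellations form an initial segment of the insertion order. This legitimises the index $m_2$: inserting $x_s\le\cdots\le x_1$ in increasing order, the smallest $s-m_2$ letters $x_s,\dots,x_{m_2+1}$ cancel and the remaining $m_2$ do not, so $T\cdot U$ genuinely passes through $\lambda\to\mu^\circ\to\mu^-\to\nu$ as written. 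Setting $\ell:=s-m_2$, the counting parts of (2), (3) and (4) are then immediate: the $m_1$ column insertions each add one box, each cancellation removes one box, and each of the remaining $m_2$ row insertions adds one box, giving $|\mu^\circ/\lambda|=m_1$, $|\mu^\circ/\mu^-|=\ell$ and $|\nu/\mu^-|=k-m_1-\ell$, with $0\le m_1\le k$ and $0\le\ell\le s=k-m_1$.

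Next I would verify the three horizontal-strip claims of (1) stage by stage. The $B_1$ letters are column-inserted in the decreasing order $a_{m_1}>\cdots>a_1$, so the Corollary following Lemma \ref{flemma} makes $\mu^\circ/\lambda$ a horizontal strip, and Lemma \ref{flemma} shows that the box contributed by $a_1$ is the rightmost box of $\mu^\circ/\lambda$. The cancelling letters are inserted in increasing order, so Lemma \ref{sundlem4} together with its orthosymplectic refinement (the unlabelled lemma immediately preceding Lemma \ref{lastlem}) shows that the successively removed boxes occupy distinct columns; hence $\mu^\circ/\mu^-$ is a horizontal strip, and since each cancellation leaves a genuine $spo$-tableau one has $\mu^-\subseteq\mu^\circ$. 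Finally the non-cancelling letters $x_{m_2}\le\cdots\le x_1$ are row-inserted in increasing order, so Lemma \ref{prevlem} places each new box strictly right of the previous one, making $\nu/\mu^-$ a horizontal strip whose leftmost box is the one added by $x_{m_2}$.

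The crux is the connecting inequality in (4), that the box added by $x_{m_2}$ lies strictly right of the box added by $a_1$. When a cancellation occurs I would apply Lemma \ref{lastlem} with $y=a_1$, with ambient tableau $a_2\rightarrow\cdots\rightarrow a_{m_1}\rightarrow T$, and with $x_{m_2+1}$ the last cancelling letter and $x_{m_2}$ the first non-cancelling letter; its conclusion is precisely that the box produced by $x_{m_2}$ lands strictly right of the column of the box added by $a_1$. When no cancellation occurs (so $\mu^-=\mu^\circ$) the same conclusion follows directly from Lemma \ref{mylemma} with $x=a_1$ and $y=x_{m_2}=x_s$. In the degenerate cases $m_1=0$ or $m_2=0$ one of the two strips is empty and the inequality in (4) is vacuous, which I would simply record.

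Finally I would treat the first-row clause of (3): if the column insertions lengthen the first row of $\lambda$, then the first rows of $\mu^\circ$ and $\nu$ coincide. The cancellation stage cannot shorten the first row by Lemma \ref{rowonelem}, applied at the insertion that placed the new first-row box, so $\mu^-$ and $\mu^\circ$ have equal first rows. For the second addition stage I would use that such a first row terminates in entries of $B_1$ and that every letter of $B_0$ is smaller than every letter of $B_1$; hence a $B_0$ row insertion into the first row must bump rather than append, and so cannot lengthen it. Thus the first row is unchanged across both $B_0$ stages. I expect the main obstacle to be exactly this matching of the single-letter hypotheses of Lemma \ref{lastlem} (and of Lemma \ref{rowonelem}) to the multi-letter construction, by isolating $a_1$ as the final column insertion and absorbing the earlier $B_1$ insertions into the ambient tableau, together with the clean separation of the degenerate cases; once this is set up, the counting and horizontal-strip assertions follow routinely from the cited lemmas.
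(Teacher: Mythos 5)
Your proposal is correct and follows essentially the same route as the paper, which presents Theorem \ref{shapethm} as a direct assembly of Lemmas \ref{flemma}--\ref{lastlem} (the paper itself only remarks that ``the above lemmas show that the shape $\nu$ of $T\cdot U$ takes a particular form'' without writing out the threading). Your identification of the rightmost box of $\mu^\circ/\lambda$ with the $a_1$-insertion, the use of Lemma \ref{lastlem} (resp.\ Lemma \ref{mylemma}) for the column inequality in (4) with and without cancellations, and the use of Lemma \ref{rowonelem} for the first-row clause of (3) all match the intended argument.
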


We will call a sequence $(\lambda, \mu^\circ,\mu^-,\nu)$, where the Young diagrams satisfy the properties in Theorem \ref{shapethm}, an {\em $spo$-bumping sequence}.  A triple $(\lambda,\mu,\nu)$, where $\mu \subseteq \lambda$ and $\mu \subseteq \nu$ will be called an {\em $sp$-bumping sequence} if $\lambda / \mu$ and $\nu / \mu$ are horizontal strips.  
The triple $(\lambda,\mu,\nu)$, where $\vert \lambda / \mu \vert =\ell$ and $\vert \nu / \mu \vert=k-\ell$, for $k \geq 0$ and $0 \leq \ell \leq k$, is an $sp$-bumping sequence if and only if the symplectic Schur function $sp_\nu$ appears in the expansion of the product of $sp_\lambda sp_{(k)}$ \cite{sundaram}.  The shape $\mu^\circ$ in the $spo$-sequence $(\lambda,\mu^\circ,\mu^-,
\nu)$ described in Theorem \ref{shapethm} is that obtained by column-insertion of the entries in $U$ that belong to $B_1$.  The triple $(\mu^\circ, \mu^-, \nu)$ is an $sp$-bumping sequence so for symplectic tableaux, the result follows from \cite{sundaram}.  

If $S=T \cdot U$ and the $spo$-bumping sequence corresponding to the product is $(\lambda,\mu^\circ,\mu^-,\nu)$, then the $spo$-insertion algorithm can be reversed to decompose $S$ as a product.  Working from the sequence, $m_1=\vert \mu^\circ/ \lambda \vert $, $\ell=\vert \mu^\circ / \mu^- \vert$, $m_2=\vert \nu /\mu^- \vert$ and $k=m_1+m_2+ \ell$.  If the entry $x$ in the rightmost box of $\nu/\mu^-$ belongs to $B_0$, then if $x$ is in the first row of $S$, it bumps out of $S$.  Otherwise, bump $x\in B_0$ into the row above it by replacing the largest entry in that row that is less than it.  Then bump the displaced entry into the row above it and continue until an entry from $B_0$ bumps out of the first row of the tableau.  If $x \in B_1$, bump $x$ into the column immediately left of it by replacing the largest entry less than it in the column with $x$.  If the displaced entry belongs to $B_1$, bump the displaced entry into the column immediately left of it in a similar way.  On the other hand, if the displaced entry belongs to $B_0$, bump it into the row above it as described above. Continue bumping displaced entries in this way until an entry from  $B_0$ bumps out of row one.   The procedure can be continued with the entries in the remaining boxes of $\nu/ \mu^-$ until $m_2$ entries $x_i$ have been bumped out of the tableau in succession with $x_1 \geq x_2 \geq \cdots \geq x_{m_2}$, since $\nu/\mu^-$ is a horizontal strip.  This yields a tableau $T_{\mu^-}$ of shape $\mu^-$.

The next portion of the reverse algorithm, which is part of the argument used to prove the symplectic Pieri rule \cite{sundaram},
 produces the symplectic part of the one-row tableau  $U$ that causes cancellations during the insertion procedure.  Add empty boxes to $T_{\mu^-}$ to produce a punctured tableau of shape $\mu^\circ$ with $\ell$ empty boxes.  Move the leftmost empty box through the tableau using reverse jeu de taquin slides until it lands in the highest box possible in column one without causing a symplectic violation.  Then place an $\ov{i}$ in the empty box, where $i$ is equal to the row number in which the empty box sits.  Then bump an $i$ into the row above the $\ov{i}$ by replacing the largest entry less than $i$ in that row with $\ov{i}$ and continue bumping up through the rows until an entry $x_{m_2+1}$ is bumped out of row one.  Continue the procedure with the empty boxes until $\ell$ entries $x_{m_2+1},\ldots,x_s$ from $B_0$ have been bumped out of the tableau in succession with $x_{m_2+1} \geq \cdots \geq x_{s}$.  This yields a tableau $T_{\mu^\circ}$ of shape $\mu^\circ$.

Finally, consider the rightmost box in  $\mu^\circ$ that does not belong to  $\lambda$; the  entry in this box of $T_{\mu^\circ}$ necessarily belongs to $B_1$.  If the entry belongs to the first column of $T_{\mu^\circ}$, it bumps out of $T_{\mu^\circ}$.  Otherwise, bump the entry  into the column immediately left of it by replacing the largest entry less than it and continue until an entry $a_1 \in B_1$ bumps out of the first column.  Repeat the process for each of the entries in the boxes that belong to $\mu^\circ$ but not to $\lambda$ until entries $a_1 <a_2< \cdots < a_{m_1}$ have been bumped out of the first column.  The tableau remaining is $T$, which has shape $\lambda$, and 
\newdimen\Squaresize \Squaresize=18pt $U=\vy{\ x_{s}  & \cdot \cdot & x_1 & a_1 & \cdot \cdot &\  a_{m_1}\ \cr}$.  

\begin{exa}

\end{exa} Suppose that $\lambda=(3,3,3,2)$, $\mu^\circ=(4,3,3,2,2)$, $\mu^-=(4,3,2,2,2)$ and $\nu=(6,3,2,2,2)$.  Then $m_1=3, \ell=1$ and $m_2=2$.  The following details the procedure for decomposing the $\nu$-tableau $S$ as a product of a $\lambda$-tableau and a one-row $(6)$-tableau.  The entries bumped out of the tableau are shown above the arrows at each stage.

\newdimen\Squaresize \Squaresize=14pt 
$$S=\vy{2 & 3 & 5 & 5 & 5^\circ & 6^\circ \cr 3 & \ov{4} & 1^\circ \cr 4 & 4^\circ \cr 2^\circ & 5^\circ \cr 2^\circ & 5^\circ \cr}  \overset{5}\longrightarrow \vy{2 & 3 & 5 & 5^\circ & 6^\circ \cr 3 & \ov{4} & 1^\circ \cr 4 & 4^\circ \cr 2^\circ & 5^\circ \cr 2^\circ & 5^\circ \cr}  \overset{3}\longrightarrow \vy{2 & \ov{4} & 5 & 6^\circ \cr 3 & 1^\circ & 5^\circ \cr 4 & 4^\circ \cr 2^\circ & 5^\circ \cr 2^\circ & 5^\circ \cr}  \longrightarrow \vy{2 & \ov{4} & 5 & 6^\circ \cr 3 & 1^\circ & 5^\circ \cr 4 & 4^\circ & \cr 2^\circ & 5^\circ \cr 2^\circ & 5^\circ \cr} $$
$$ \longrightarrow \vy{ & 2 & 5 & 6^\circ \cr 3 & \ov{4} & 1^\circ \cr 4 & 4^\circ & 5^\circ \cr 2^\circ & 5^\circ \cr 2^\circ & 5^\circ \cr} \overset{1}\longrightarrow \vy{\ov{1} & 2 & 5 & 6^\circ \cr 3 & \ov{4} & 1^\circ \cr 4 & 4^\circ & 5^\circ \cr 2^\circ & 5^\circ \cr 2^\circ & 5^\circ \cr} \overset{2^\circ}\longrightarrow \vy{\ov{1} & 2 & 5 \cr 3 & \ov{4} & 1^\circ \cr 4 & 5^\circ & 6^\circ \cr 2^\circ & 5^\circ \cr 4^\circ & 5^\circ \cr} \overset{4^\circ}\longrightarrow \vy{\ov{1} & 2 & 5 \cr 3 & \ov{4} & 1^\circ \cr 4 & 5^\circ & 6^\circ \cr 2^\circ & 5^\circ \cr 5^\circ \cr} \overset{5^\circ}\longrightarrow \vy{\ov{1} & 2 & 5 \cr 3 & \ov{4} & 1^\circ \cr 4 & 5^\circ & 6^\circ \cr 2^\circ &  5^\circ \cr}$$
$$S=\vy{\ov{1} & 2 & 5 \cr 3 & \ov{4} & 1^\circ \cr 4 & 5^\circ & 6^\circ \cr 2^\circ & 5^\circ \cr} \cdot \vy{1 & 3 & 5 & 2^\circ & 4^\circ  & 5^\circ \cr}$$

\bigskip

The following Lemma is the key to proving the equivalence of the symplectic and orthosymplectic Pieri rules.

\begin{lem}\label{bijectionlem} Let $\ell$ and $k$ be nonnegative integers with $\ell \leq k$.  There is a bijection between the set of pairs $((\lambda,\mu,\nu),m_1)$, consisting of $sp$-bumping sequences $(\lambda,\mu,\nu)$ where $\vert \lambda/ \mu \vert =\ell$ and $\vert\nu / \mu \vert = k-\ell$, and $spo$-bumping sequences $(\lambda,\mu^\circ,\mu^-,\nu)$, where $\vert \mu^\circ / \lambda \vert =m_1$, $\vert \mu^\circ / \mu^- \vert = \ell$, and $\vert \nu / \mu^- \vert =m_2=k-\ell-m_1$. \end{lem}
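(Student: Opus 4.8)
The plan is to fix the outer shapes $\lambda$ and $\nu$, which occur on both sides of the asserted bijection, and to exhibit mutually inverse maps between the middle datum $(\mu,m_1)$ of an $sp$-bumping sequence and the pair $(\mu^\circ,\mu^-)$ of an $spo$-bumping sequence. Throughout I would work in conjugate (column-length) coordinates, so that each of $\lambda/\mu$, $\nu/\mu$, $\mu^\circ/\lambda$, $\mu^\circ/\mu^-$ and $\nu/\mu^-$ is recorded as a set of columns carrying a single box, and every horizontal-strip and containment hypothesis becomes a system of interlacing inequalities between column-length sequences.

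For the forward map, given $(\lambda,\mu,\nu)$ with $|\lambda/\mu|=\ell$ and $|\nu/\mu|=k-\ell$ together with $m_1$, where $0\le m_1\le k-\ell$, I first split the addition strip $\nu/\mu$: reading its boxes by increasing column, let $\mu^-$ be $\mu$ with the leftmost $m_1$ of these boxes adjoined. A short interlacing argument shows that $\mu^-$ is again a partition, that $\mu^-/\mu$ and $\nu/\mu^-$ are horizontal strips of sizes $m_1$ and $m_2=k-\ell-m_1$, and that $\nu/\mu^-$ occupies columns to the right of $\mu^-/\mu$. I then take $\mu^\circ$ to be the partition obtained by adjoining $m_1$ boxes to $\lambda$ in the leftmost positions for which $\mu^\circ\supseteq\mu^-$ and $\mu^\circ/\mu^-$ remains a horizontal strip. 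Since $|\mu^\circ|=|\lambda|+m_1=|\mu^-|+\ell$, the strip $\mu^\circ/\mu^-$ then has size $\ell$ automatically, and the leftmost choice is exactly what yields the separation demanded in condition (4) and the first-row requirement in condition (3); the latter is the shape-level shadow of Lemma \ref{rowonelem}.

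The reverse map reads $m_1=|\mu^\circ/\lambda|$ off the $spo$-sequence and undoes the two steps in turn. Starting from $(\mu^\circ,\mu^-,\nu)$, which is itself an $sp$-bumping sequence, it deletes the strip $\mu^\circ/\lambda$ to return $\mu^\circ$ to $\lambda$; applied simultaneously to the inner shape $\mu^-\subseteq\mu^\circ$, this deletion slides $\mu^-$ down to a partition $\mu$, and one checks that $\mu^-$ is thereby recovered as $\mu$ together with the leftmost $m_1$ boxes of $\nu/\mu$, so that the two maps are inverse.

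I expect the construction and well-definedness of $\mu^\circ$, and the verification that the two maps are mutually inverse, to be the main obstacle. The essential subtlety is that the box of $\mu^\circ/\lambda$ need not lie in the same column as the corresponding box of $\mu^-/\mu$: reconciling the adjoined $B_1$-strip with the symplectic cancellation strip forces a jeu-de-taquin-type shift, so one cannot simply match columns. The crux is therefore a horizontal-strip analogue of the reversibility of jeu de taquin, namely that for $\lambda\subseteq\mu^\circ$ and $\mu^-\subseteq\mu^\circ$ with the prescribed strip sizes and separation there is exactly one partition $\mu$ making $(\lambda,\mu,\nu)$ an $sp$-bumping sequence with $|\lambda/\mu|=\ell$ and with $\mu^-$ equal to $\mu$ plus the leftmost $m_1$ boxes of $\nu/\mu$. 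I would establish this existence and uniqueness directly from the interlacing inequalities, where the only freedom arises in the columns in which $\lambda$ and $\mu^-$ agree and where partition-monotonicity, together with conditions (3) and (4), pins down the admissible lift and its inverse slide. Finally, since $(\mu^\circ,\mu^-,\nu)$ is an $sp$-bumping sequence, Sundaram's symplectic Pieri analysis \cite{sundaram} furnishes an independent count against which the correspondence can be confirmed to be a bijection.
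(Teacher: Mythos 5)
Your overall architecture matches the paper's: fix $\lambda$ and $\nu$, cut $\nu/\mu$ after its leftmost $m_1$ boxes to produce $\mu^-$ (this is exactly the paper's construction, phrased there as deleting the rightmost $m_2$ boxes of $\nu/\mu$ from $\nu$), and then transfer the remaining $\ell$-box strip between $\lambda$ and $\mu^\circ$. The genuine gap is that the transfer rule --- which is essentially the entire content of the lemma --- is never actually specified. You characterize $\mu^\circ$ only as ``the leftmost positions for which $\mu^\circ\supseteq\mu^-$ and $\mu^\circ/\mu^-$ remains a horizontal strip,'' and you describe the inverse only as a deletion that ``slides $\mu^-$ down to a partition $\mu$''; you give no argument that such a $\mu^\circ$ exists, that the greedy choice satisfies conditions (3) and (4) of Theorem \ref{shapethm}, or that the two operations are mutually inverse, and you explicitly defer all of this to unspecified ``interlacing inequalities.'' The paper's proof consists precisely of the explicit box-matching rules you are missing: passing from $(\lambda,\mu^\circ,\mu^-,\nu)$ to $\mu$, each box of $\mu^\circ/\mu^-$ is matched, working top to bottom and right to left, to the same box of $\lambda$ if that box lies in $\lambda$, and otherwise to the rightmost not-yet-used box of the row above; in the other direction, each box of $\mu^-/\mu$ is adjoined to $\lambda$ at the same position if it is absent from $\lambda$, and otherwise at the end of the first available row below. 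The row shift built into these rules is exactly the ``jeu-de-taquin-type shift'' you identify as the crux, and the remaining work (Lemma \ref{rowonelem} to control the first row, a case analysis of the three types of boxes of $\nu/\mu$ to see that it is a horizontal strip, and the check that distinct columns stay distinct) is what makes the maps well defined. Identifying the difficulty is not the same as resolving it.

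The fallback you offer also does not close the gap: Sundaram's Pieri analysis \cite{sundaram} enumerates $sp$-bumping sequences, i.e., only one side of the asserted correspondence. It provides no independent count of $spo$-bumping sequences, so it cannot be used to upgrade a one-sided construction to a bijection --- equality of the two cardinalities is essentially what Lemma \ref{bijectionlem} is asserting in the first place.
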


\begin{proof}
Given an $spo$-bumping sequence $(\lambda,\mu^\circ,\mu^-,\nu)$,  if $\mu^\circ=\mu^-$, then  $\ell=0$ and the corresponding $sp$-bumping sequence is $(\lambda,\mu,\nu)$, where $\mu=\mu^\circ$.  Otherwise, $\mu^-\subset \mu^\circ$ and we can form an $sp$-bumping sequence $(\lambda,\mu,\nu)$, where $\vert \lambda/ \mu \vert=\ell$, by first placing dots in all boxes in $\mu^\circ$ that belong to $\mu^\circ$ but not to $\mu^-$.    If a box  in the first row of $\mu^\circ$ contains a dot, there must also be a box in that position in $\lambda$ by Lemma \ref{rowonelem} and all  boxes containing dots are in different columns. Working from top to bottom and right to left, if a  box containing a dot belongs to $\lambda$, place a dot in the box in $\lambda$.  If the dotted box does not belong to $\lambda$, and belongs to row $i$ in $\mu^\circ$, then place a dot in  the rightmost box of row $i-1$ of $\lambda$ that does not already contain a dot.   In this way, a unique box in $\lambda$ can be chosen for each of the $\ell$ dotted boxes in $\mu^\circ$ and, at the end of the process, the dotted boxes are all in different columns of $\lambda$.

 Once $\ell$ boxes contain dots in $\lambda$, remove the dotted boxes from $\lambda$ to form $\mu \subseteq \nu$.  
Any box in $\nu/\mu$ must correspond to a box of one of the following three types: a box that belongs to $\nu/\mu^-$, a box that belongs to $\mu^\circ/\lambda$ but not $\mu^\circ/\mu^-$, or a box that is dotted in $\lambda$, via the above process, but not dotted in $\mu^\circ$.  Boxes in $\nu/\mu$ of the first two types together form a horizontal strip in $\nu/\mu$.  The rightmost box in $\nu/\mu$ of the third type  belongs to a column of $\nu$ that is strictly left of the leftmost column that contains a box from $\nu/\mu^-$.  As well, any box in $\nu/\mu$ of the third type corresponds to a dotted box in $\lambda$ that belongs to a column of $\nu$ that does not contain a box corresponding to the second type, since boxes in $\mu^\circ /\lambda$ form a horizontal strip.  It follows that $\nu/\mu$ is a horizontal strip and $(\lambda,\mu,\nu)$ is an $sp$-bumping sequence.

Given a pair $((\lambda,\mu,\nu),m_1)$ as in the statement of the lemma, we reverse the above procedure to produce $(\lambda,\mu^\circ,\mu^-,\nu)$.  If $\mu=\nu$, then $k=\ell$, so $m_1=m_2=0$.  In this case, $\mu^\circ=\lambda$ and $\mu^-=\nu$.  Otherwise, remove the rightmost $m_2$ boxes from $\nu$ that belong to $\nu$ but not to $\mu$ to give the shape $\mu^-$, where $\mu\subseteq \mu^-$.  If $m_2=k-\ell$ so that $\mu=\mu^-$, then $m_1=0$ so let $\mu^\circ=\lambda$.  Otherwise, place dots in all boxes in $\mu^-$ that belong to $\mu^-$ but not to $\mu$; there are $m_1$ such boxes.  Working from top to bottom and from left to right, if a dotted box does not belong to $\lambda$, add a box to 
$\lambda$ in that position.  If a dotted box does belong to $\lambda$, add a box to $\lambda$ in the first row below this box.  The new shape formed in this way is $\mu^\circ$, which contains $m_1$ more boxes than $\lambda$ and the sequence $(\lambda,\mu^\circ,\mu^-,\nu)$ is an $spo$-bumping sequence.  \end{proof}

\begin{exa} {\ }  Consider the $spo$-bumping sequence $(\lambda,\mu^\circ,\mu^-,\nu)$, where \\
$\lambda=\vy{ & & & \bullet \cr & & \cr & \bullet & \bullet \cr \cr \cr}$\ \ $\mu^\circ=\vy{ & & & \bullet \cr & & \cr & & \bullet \cr & \bullet \cr \cr \cr}$ \ \ $\mu^-=\vy{& &  \cr  & & \cr & \cr \cr \cr \cr}$ \ \ $\nu=\vy{ & & & & \cr & & \cr & \cr \cr \cr \cr}$

\n By considering the boxes in $\mu^\circ$ that are not in $\mu^-$ and following the procedure in the proof of Lemma \ref{bijectionlem}, with boxes dotted as above, we obtain $\mu=(3,3,1,1,1)$ 
and  $sp$-bumping sequence
$(\lambda, \mu, \nu)$.

\end{exa}

\begin{exa}
Let $m_1=m_2=2$ and $\ell=3$ and consider the $sp$-bumping sequence $(\lambda, \mu, \nu)$ below: \\

$\lambda=\vy{ & & & \cr & & \cr & & \cr \cr \cr}$ \ \ $\mu=\vy{ & & \cr  & & \cr \cr \cr \cr}$ \ \ $\nu=\vy{& & & & \cr & & \cr & \cr \cr \cr \cr}$

\noindent Then $\mu^-=\vy{& & \cr & & \cr & \bullet \cr \cr \cr  \bullet \cr}$  and $\mu^\circ=\vy{& & & \cr & & \cr & & \cr & \cr \cr \cr}$,
 where the dotted boxes are determined as in the proof of Lemma \ref{bijectionlem}.

\end{exa}

\begin{cor}Let $T$ be an $spo$-tableau of shape $\lambda$ and let $U$ be a one-row $(k)$-tableau.  Then $T\cdot U$ has shape $\nu$, where $\nu$ is the end shape in an $sp$-bumping sequence $(\lambda,\mu,\nu)$.  In other words, $\lambda/\mu$ is a horizontal strip with $\vert \lambda/\mu \vert=\ell$, for some $0 \leq \ell \leq k$,  and $\nu/\mu$ is a horizontal strip with $\vert \nu/\mu\vert=k-\ell$.

\end{cor}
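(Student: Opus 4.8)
The plan is to combine Theorem~\ref{shapethm} with the bijection of Lemma~\ref{bijectionlem}, observing that both preserve the endpoint shapes $\lambda$ and $\nu$, so that almost nothing remains to be done once those two results are in hand.

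First I would apply Theorem~\ref{shapethm} directly. Since $T$ has shape $\lambda$ and $U$ is a one-row $(k)$-tableau, the theorem guarantees that $T\cdot U$ has shape $\nu$, where $(\lambda,\mu^\circ,\mu^-,\nu)$ is an $spo$-bumping sequence. In particular this produces an honest $spo$-bumping sequence whose first shape is $\lambda$ and whose last shape is $\nu$, together with the bookkeeping $\vert\mu^\circ/\lambda\vert=m_1$, $\vert\mu^\circ/\mu^-\vert=\ell$, $\vert\nu/\mu^-\vert=m_2$, and $k=m_1+m_2+\ell$.

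Next I would invoke Lemma~\ref{bijectionlem} with this value of $\ell$. The lemma furnishes a bijection between $spo$-bumping sequences $(\lambda,\mu^\circ,\mu^-,\nu)$ and pairs $((\lambda,\mu,\nu),m_1)$ in which $(\lambda,\mu,\nu)$ is an $sp$-bumping sequence with $\vert\lambda/\mu\vert=\ell$ and $\vert\nu/\mu\vert=k-\ell$. The essential observation, immediate from the statement of the lemma, is that this correspondence fixes the two outer shapes: the $spo$-bumping sequence and its associated $sp$-bumping sequence share the same $\lambda$ at the start and the same $\nu$ at the end. Applying the bijection to $(\lambda,\mu^\circ,\mu^-,\nu)$ therefore yields an $sp$-bumping sequence $(\lambda,\mu,\nu)$ with exactly the same $\nu$, which is precisely the assertion of the corollary.

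Finally, to recover the explicit ``in other words'' reformulation, I would simply unpack the definition of $sp$-bumping sequence for the triple $(\lambda,\mu,\nu)$ just obtained: $\mu\subseteq\lambda$, $\mu\subseteq\nu$, both $\lambda/\mu$ and $\nu/\mu$ are horizontal strips, and the cardinality constraints $\vert\lambda/\mu\vert=\ell$ with $0\leq\ell\leq k$ and $\vert\nu/\mu\vert=k-\ell$ hold, these last being supplied directly by Lemma~\ref{bijectionlem}. Since all of the genuine combinatorial work has already been carried out in proving Theorem~\ref{shapethm} and in constructing the bijection, I do not anticipate any real obstacle here; the only point that warrants a sentence of care is the verification that the bijection preserves both endpoint shapes, and this is transparent from the way $(\lambda,\mu^\circ,\mu^-,\nu)$ and $((\lambda,\mu,\nu),m_1)$ are matched in Lemma~\ref{bijectionlem}.
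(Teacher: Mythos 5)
Your proposal is correct and follows exactly the route the paper intends: the corollary is stated immediately after Lemma~\ref{bijectionlem} precisely because it is the composition of Theorem~\ref{shapethm} (which produces the $spo$-bumping sequence $(\lambda,\mu^\circ,\mu^-,\nu)$ for $T\cdot U$) with the bijection of Lemma~\ref{bijectionlem} (which converts it to an $sp$-bumping sequence $(\lambda,\mu,\nu)$ with the same outer shapes and the stated cardinalities). Your added remark that the bijection fixes the endpoint shapes $\lambda$ and $\nu$ is the only verification needed, and it is indeed immediate from the statement of the lemma.
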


\begin{thm} Consider an $sp$-bumping sequence $(\lambda,\mu,\nu)$, where $\lambda/\mu$ is a horizontal strip with $\vert \lambda/\mu \vert=\ell$, for some $0 \leq \ell \leq k$,  and $\nu/\mu$ is a horizontal strip with $\vert \nu/\mu\vert=k-\ell$.
If $S$ is an $spo$-tableau of shape $\nu$, then $S$ can be decomposed as a product of an $spo$-tableau of shape $\lambda$ and a $(k)$-tableau.

\end{thm}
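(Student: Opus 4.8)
The plan is to produce the factorization $S=T\cdot U$ by running, in reverse, the three-stage $spo$-insertion described in Theorem~\ref{shapethm}. I would begin by feeding the given $sp$-bumping sequence into Lemma~\ref{bijectionlem}: choosing a value $m_1$ with $0\le m_1\le k-\ell$ and applying the inverse direction of that bijection to the pair $((\lambda,\mu,\nu),m_1)$ yields an $spo$-bumping sequence $(\lambda,\mu^\circ,\mu^-,\nu)$ with $|\mu^\circ/\lambda|=m_1$, $|\mu^\circ/\mu^-|=\ell$ and $|\nu/\mu^-|=k-\ell-m_1$; taking $m_1=0$, so that $\mu^\circ=\lambda$ and $\mu^-=\mu$, is the most economical choice and makes the first addition stage vacuous. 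Because $S$ has shape $\nu$, this sequence is exactly the data required by the reverse procedure spelled out before the theorem, and the task reduces to checking that each of its stages is well defined, inverts the matching forward stage, and leaves behind a genuine $spo$-tableau $T$ of shape $\lambda$ together with expelled letters that assemble into a one-row $(k)$-tableau $U$.

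I would then verify the two addition-reversal stages, which are the routine ones. For the reverse of the final addition stage I process the $m_2=|\nu/\mu^-|$ boxes of $\nu/\mu^-$ from right to left; each such box is an outer corner, and whether it holds a letter of $B_0$ or of $B_1$, the prescribed reverse insertion (reverse Berele row bumping, or reverse column insertion cascading upward into a row bump) is the classical inverse of its forward counterpart \cite{fulton} and expels a single letter of $B_0$ from the first row. Since $\nu/\mu^-$ is a horizontal strip, processing right to left forces the expelled letters out weakly decreasing, $x_1\ge\cdots\ge x_{m_2}$, which is precisely the reverse of the monotonicity recorded in Lemmas~\ref{splem} and \ref{prevlem}; this produces the intermediate tableau of shape $\mu^-$. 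When $m_1>0$ the reverse of the first addition stage is dual: the $m_1$ boxes of $\mu^\circ/\lambda$ carry letters of $B_1$, and reverse column insertion (inverse to the forward step, with output order governed by reversing Lemma~\ref{flemma}) expels $a_1<\cdots<a_{m_1}\in B_1$, leaving a tableau of shape $\lambda$. The expelled letters, read as $x_s\,\cdots\,x_1\,a_1\,\cdots\,a_{m_1}$, then satisfy the semistandardness of a one-row $spo$-tableau, so $U$ is a legitimate $(k)$-tableau.

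The cancellation-reversal stage is where the genuine content lies, and I expect it to be the principal obstacle. Here one appends $\ell=|\mu^\circ/\mu^-|$ empty boxes to obtain a punctured tableau of shape $\mu^\circ$, drives each empty box up to the highest admissible position in the first column by reverse jeu de taquin, inserts the forced letter $\ov{i}$ (with $i$ the row index reached), and bumps upward to expel a further letter of $B_0$. Two points must be secured: first, that the reverse slides return the empty box to exactly the box from which an $\ov{i}$ was deleted during the forward cancellation, so that the inserted barred letter and the letter it ultimately expels are uniquely and correctly determined; and second, that the symplectic condition is preserved throughout and the expelled letters continue the weakly decreasing chain $x_{m_2+1}\ge\cdots\ge x_s$. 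For the first point I would invoke van Leeuwen's tableau-switching results \cite{leeuwen}, already used in the lemma just before Lemma~\ref{lastlem}, to guarantee that forward and reverse jeu de taquin remain mutually inverse even after the slide paths traverse the $B_1$-portion of the tableau; for the second I would argue as in the symplectic case \cite{sundaram}, using Sundaram's Lemma~\ref{sundlem4} together with that same distinct-column lemma to control the positions of the vacated boxes, while property~(3) of the $spo$-bumping sequence combined with Lemma~\ref{rowonelem} ensures that no first-row box is consumed in error. Since each of the three reverse stages inverts its forward counterpart, composing them and applying the forward product of Theorem~\ref{shapethm} shows $T\cdot U=S$, which establishes the desired decomposition.
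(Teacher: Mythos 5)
The decisive flaw is your treatment of $m_1$ as a free parameter that can be ``economically'' set to $0$. In the paper's proof, $m_1$ and $m_2$ are not chosen in advance: they are read off from the tableau $S$ together with the sequence $(\lambda,\mu,\nu)$. One reverse-bumps the entries of the rightmost boxes of $\nu/\mu$ one at a time and observes what happens: each reverse bump either expels a letter of $B_0$ from the first row (these contribute to $m_2$, the reverse of the final addition stage) or drives a letter of $B_1$ out of the first column (which signals that this box and all remaining ones must be charged to the first addition stage, i.e.\ to $m_1$). Declaring $m_1=0$ asserts that all $k-\ell$ boxes of $\nu/\mu$ arose from Berele row insertions of $B_0$ letters, and for a general $spo$-tableau $S$ this is simply false. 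The paper's own worked example (the tableau with first row $2,\,1^\circ,\,3^\circ$ and $sp$-sequence with $\ell=2$) has $m_2=1$ and $m_1=1$ precisely because the second reverse bump expels a $2^\circ$ from the first column; with $m_1=0$ the reverse of the final addition stage cannot be completed there, since no $B_0$ letter ever reaches row one, so your procedure produces no decomposition at all. Moreover, even where several values of $m_1$ did yield decompositions, a free choice would attach more than one factorization of $S$ to a single $sp$-bumping sequence, which would destroy the count $\alpha_\nu$ in Theorem~\ref{mainthm}; the whole point of the paper's argument is that $(S,(\lambda,\mu,\nu))$ determines $m_1$ uniquely, after which Lemma~\ref{bijectionlem} supplies the unique $spo$-bumping sequence $(\lambda,\mu^\circ,\mu^-,\nu)$ to reverse along.

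A secondary issue: you frame the remaining work as checking that each reverse stage ``inverts its forward counterpart,'' but the theorem is applied to an arbitrary $spo$-tableau $S$ of shape $\nu$, not one known to have come from a forward insertion with this sequence. What actually has to be verified is that the reverse procedure is \emph{well defined} on such an $S$: the paper does this by exhibiting a chain $a_1<\cdots<a_j$ of $B_1$ entries in columns $1$ through $j$, showing that the reverse jeu de taquin slides of the cancellation stage cannot move any $a_i$ out of its column, and concluding that every box of $\mu^\circ/\lambda$ still holds a $B_1$ entry when the last reversal stage is reached, with Lemma~\ref{rowonelem} controlling the first-row interaction. Your appeal to van Leeuwen and to mutual inversion of the slides does not substitute for this well-definedness argument.
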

\begin{proof}  We first associate a unique $m_1$ and $m_2$ to ($S$, $(\lambda,\mu,\nu)$), which will allow us to find the $spo$-bumping sequence corresponding to $((\lambda,\mu,\nu),m_1)$ in accordance with Lemma \ref{bijectionlem}.  We will then decompose $S$  by reversing the $spo$-insertion algorithm.  Since $\ell= \vert \lambda / \mu \vert$, $k-\ell= \vert \nu / \mu \vert$, and $m_1+m_2=k- \ell$, we need only determine a value for $m_1$ or $m_2$.  

If $S$ is a symplectic tableau (so does not contain entries from $B_1=\{1^\circ,\ldots,n^\circ\}$) then $m_1=0$ and if $\mu=\nu$ then $m_1=m_2=0$.  Otherwise, consider the rightmost box in $\nu$ that does not belong to $\mu$ and begin reversing the $spo$-insertion process with the entry $x$ that belongs to this box in $S$.  If $x \in B_1$ and is in the first column  of $S$ or if $x \in B_0=\{1,\ov{1},\ldots,m,\ov{m}\}$ and is in the first row, $x$ bumps out of the tableau and the box it occupies is removed from $S$, leaving a tableau $S_1$ of shape $\nu_1$ with one less box.   Otherwise, if  $x \in B_1$, bump it into the column immediately to its left by displacing the largest entry in that column that is less than $x$ and replacing it with $x$.  If $x \in B_0$, bump it into the row above it by replacing the largest entry in that row that is less than it.  Continue the procedure with the displaced entries until an entry bumps out of the first row or the first column, leaving a tableau $S_1$ of shape $\nu_1$ with one less box.  If  this results in an entry from $B_1$ getting  bumped out of the first column, then $m_2=0$.  Otherwise, repeat the procedure for the entry in the rightmost box in $S_1$ that does not belong to $\mu$  until an entry bumps out of the first row or column.  Repeat the procedure until $k-\ell$ entries have been bumped out of the tableau or until an entry in $B_1$ is bumped out of the first column.  Let $m_2$ equal  the number of entries in $B_0$ that are bumped out of the first row during this process.   The tableau $T_1$ remaining after bumping these $m_2$ entries from the tableau   will have shape $\mu^-$ in the $spo$-bumping sequence described below.   If $m_2=k-\ell$, then $\mu=\mu^-$ and $m_1=0$.  Otherwise, if  the rightmost box in $T_1$ that does not belong to $\mu$ is in column $j$, then the entry in this box belongs to $B_1$  and, since bumping this entry through the tableau causes an entry from $B_1$ to bump out of the first column,  for each column $i$ with $1 \leq i \leq j$ there is an entry $a_i \in B_1$ in column $i$ and an entry $a_{i-1} \in B_1$ in column $i-1$, with $a_{i-1}<a_i$, that belongs to a row below $a_i$. 

We can now find the image of  $((\lambda,\mu,\nu),m_1)$, using Lemma \ref{bijectionlem}, which is an $spo$-bumping sequence $(\lambda,\mu^\c, \mu^-,\nu)$.  We now verify that the $spo$-insertion process can be reversed, as described in the paragraphs following Theorem \ref{shapethm}, using this particular sequence and these values of $m_1$ and $m_2$.  Reversing the $spo$-insertion process first bumps the $m_2$ entries from $B_0$ as described above, leaving $T_1$, which has shape $\mu^-$.  

Continuing the reversed $spo$-insertion algorithm then adds $\ell$ empty boxes to $T_1$,  which are each moved through the tableau through the jeu de taquin process, resulting in a tableau $T_2$ of shape $\mu^\circ$.  Provided the empty boxes each start in rows below the first, the tableau $T_2$ still contains an entry from $B_1$ in each of columns $1$ through $j$ since the jeu de taquin process cannot move an entry $a_{i-1}$ below an entry $a_i$, where $a_{i-1}<a_i$.  If an empty box starts in the first row of $T_1$, then $\mu^-$ has fewer boxes in the first row than  $\mu^\circ$ so, by Lemma \ref{rowonelem}, $\lambda$ and $\mu^\circ$ have the same number of boxes in the first row, which means that no entries are bumped out of the first row in the next stage of the reverse $spo$-algorithm.

Since $\mu^\circ/\lambda$ is a horizontal strip with  $m_2$ boxes corresponding to columns left of column $j+1$ in $\mu^\circ$, any entry in $T_2$ that belongs to a box of  $\mu^\circ / \lambda$ must be at the bottom of column $j$ or at the bottom of a column left of column $j$, so contains an entry from $B_1$.  Thus the final stage of the $spo$-insertion algorithm can be completed to bump $m_2$ entries from  $B_1$ from $T_2$, producing a tableau $T$ of shape $\lambda$ and a $(k)$-tableau, whose product is $S$.
\end{proof}

\begin{exa}Consider the $spo$-tableau $\vy{2 & 1^\c & 3^\c \cr \ov{2}\cr 2^\c \cr}$ and the $sp$-bumping sequence\\
$\left(\vy{& & \cr \cr \cr}, \vy{ \cr \cr \cr}, \vy{ & & \cr \cr \cr }\right)$.    Reversing the $spo$-insertion process for the entry in the $(1,3)$-box  bumps a $2$ out of the first row, and then repeating the process for the entry in the $(1,2)$-box in the resulting tableau bumps a $2^\circ$ out of the first column.  It follows that $m_2=1$ and, since $\ell=2$, $m_1=1$.  The corresponding $spo$-bumping sequence, via Lemma \ref{bijectionlem}, is then 
 $\left(\vy{& & \cr \cr \cr}, \vy{& & \cr & \cr \cr}, \vy{ & \cr \cr \cr}, \vy{ & & \cr \cr \cr }\right)$.  We can use this $spo$-bumping sequence and the given tableau to reverse the $spo$-insertion process and decompose the tableau as a product:
 $$\vy{2 & 1^\c & 3^\c \cr \ov{2}\cr 2^\c \cr}= \vy{\ov{1} & \ov{1} & \ov{2} \cr 1^\circ  \cr 3^\circ\cr } \cdot \vy{1 & 1 & 2 & 2^\circ \cr}.$$
 
 \noindent Note that if we consider the same $spo$-tableau but  $sp$-bumping sequence \\
 $\left(\vy{& & \cr \cr \cr}, \vy{ & \cr \cr}, \vy{ & & \cr \cr \cr }\right)$, then the corresponding $spo$-bumping sequence is\\
 $\left(\vy{& & \cr \cr \cr}, \vy{& &  \cr \cr  \cr \cr}, \vy{ & \cr \cr \cr}, \vy{ & & \cr \cr \cr }\right)$, which yields
  $$\vy{2 & 1^\c & 3^\c \cr \ov{2}\cr 2^\c \cr}= \vy{\ov{1} & \ov{1} & 3^\circ \cr \ov{2} \cr 1^\circ  \cr } \cdot \vy{1 & 1 &  2 & 2^\circ \cr}.$$
\end{exa}

\bigskip

Theorem \ref{mainthm} now follows.  Our final example illustrates the orthosymplectic Pieri rule.

\begin{exa} Let $\lambda=(3,1)$ and $k=3$. Then $\ell=0, 1, 2$ or $3$.  We have
$$spo_\lambda spo_{(k)}=spo_{(6,1)}+ spo_{(5,2)}+spo_{(5,1,1)}+spo_{(4,3)}+spo_{(4,2,1)}+2spo_{(4,1)}+2spo_{(3,2)}$$
$$+spo_{(3,1,1)}+spo_{(2,2,1)}+
spo_{(5)}+2spo_{(2,1)}+spo_{(1,1,1)}+spo_{(3)}+spo_{(1)}.$$

\end{exa}

\subsection*{Acknowledgement}
{The author wishes to thank an anonymous referee, who carefully read the paper and provided useful suggestions.}

\begin{bibdiv}

\begin{biblist}

\bib{benkart}{article}{author={G. Benkart}, author={C. Shader}, author={A. Ram},
 title={Tensor product representations for orthosymplectic Lie superalgebras},
 journal={J. Pure Appl. Algebra}, volume={130}, year={1998}, pages={1--48}}
 
\bib{berele}{article}{author={A. Berele}, title={A Schensted-type correspondence for the symplectic group}, journal={J. Combin. Theory Ser. A}, volume={43}, year= {1986}, pages={320--328}}

\bib{bereleregev}{article}{author={A. Berele}, author={A. Regev},
 title={Hook Young diagrams with applications to combinatorics and to representations of Lie superalgebras},
 journal={Adv. in Math.}, volume={64}, year={1987}, pages={118--175}}
 
 \bib{remmel}{article}{author={A. Berele}, author={J. Remmel}, title={Hook flag characters and their combinatorics}, journal={J. Pure Appl. Algebra}, volume={35}, year={1985}, pages={225--245}}

\bib{cheng}{book}{author={S.-J. Cheng}, author={W. Wang},
 title={Dualities and Representations of Lie Superalgebras}, series={Grad. Stud. Math.},volume={144},
 publisher={Amer. Math. Soc.}, place={Providence}, year={2012}}

\bib{fulton}{book}{author={W. Fulton}, 
 title={Young tableaux},
 series={London Mathematical Society Student Texts}, publisher={Cambridge University Press}, place={Cambridge}, year={1997}
}

\bib{king}{article}{author={R. King}, title={Weight multiplicities for the classical groups},
 book={
  title={Group Theoretical Methods in Physics}, volume={50},
  publisher={Springer}, place={Berlin/Heidelberg/New York}, year={1976}},
 pages={490--499}}

\bib{littlewood}{article}{author={D.E. Littlewood}, author={A.R. Richardson}, title={Group characters and algebra}, journal={Philos. Trans. R. Soc. Lond. Ser. A}, volume={233}, year={1934}, pages={99--141}}

\bib{remmel}{article}{author={J. B. Remmel}, title={The combinatorics of $(k,l)$-hook Schur functions}, booktitle={Combinatorics and Algebra (Boulder, Colo., 1983)}, series={Comtemp. Math.}, volume={34}, publisher={Amer. Math. Soc.}, place={Providence, RI}, pages={253--287}, year={1984}}

\bib{sagan}{book}{author={B. E. Sagan}, title={The symmetric group: Representations, combinatorial algorithms, and symmetric functions},  series={Graduate Texts in Mathematics}, edition={Second edition},publisher={Springer-Verlag}, year={2001}}

\bib{schensted}{article}{author={C. Schensted}, title={Longest increasing and decreasing subsequences}, journal={Canad. J. Math.}, volume={13}, year={1961}, pages={179--191}}

\bib{schutz}{article}{author={M. P. Sch\"{u}tzenberger}, title={La correspondance de Robinson}, booktitle={Combinatoire et Repr\'{e}sentation du Groupe Sym\'{e}trique (Actes Table Ronde CNRS, Univ. Louis-Pasteur Strasbourg, Strasbourg, 1976)}, series={Lecture Notes in Mathematics}, volume={579},publisher={Springer-Verlag}, pages={59--113}, place={Berlin, New York}, year={1977}}


\bib{stanley}{book}{author={R.P. Stanley}, title={Enumerative Combinatorics}, volume={2}, place={New York}, publisher={Cambridge University Press}, pages={316Ð380}, year={1999}}

\bib{stokkevisentin}{article}{author={A. Stokke}, author={T. Visentin}, title={Lattice path constructions for orthosymplectic determinantal formulas}, journal={European J. Combin.}, volume={58}, year={2016}, pages={38--51}}
 
 \bib{sundaram}{article}{author={S. Sundaram}, title={The Cauchy identity for $Sp(2n)$}, journal={J. Combin. Theory Ser. A}, volume={43}, year= {1990}, pages={209--238}}

\bib{leeuwen}{article}{author={M. A. A. van Leeuwen}, title={The Littlewood-Richardson
rule and related combinatorics.  Interaction of Combinatorics
and Representation Theory}, journal={Math. Soc. of Japan Memoirs}, volume={11}, year={2001}
pages={95--145}}

\end{biblist}

\end{bibdiv}

\end{document}